   \long\def\comment#1{}
\newcommand*{\rom}[1]{\expandafter\@slowromancap\romannumeral #1@}
\newtheorem{theorem}{Theorem}[section]
\newtheorem{lemma}[theorem]{Lemma}
\newtheorem{remark}[theorem]{Remark}
\theoremstyle{definition}
\def\ad#1{\begin{aligned}#1\end{aligned}}  
 \def\an#1{\begin{align}#1\end{align}}
\def\p#1{\begin{pmatrix}#1\end{pmatrix}} 
  \numberwithin{equation}{section}
\numberwithin{table}{section}
\numberwithin{figure}{section}
\def\div{\operatorname{div}}
\def\boxit#1{\vbox{\hrule height1pt \hbox{\vrule width1pt\kern1pt
     #1\kern1pt\vrule width1pt}\hrule height1pt }}
 \def\lab#1{\boxit{\small #1}\label{#1}}
  \def\lab#1{\label{#1}}  
\newcommand{\bc}{\begin{center}}
\newcommand{\ec}{\end{center}}
\newcommand{\be}{\begin{eqnarray}}
\newcommand{\ee}{\end{eqnarray}}
\newcommand{\ben}{\begin{eqnarray*}}
\newcommand{\een}{\end{eqnarray*}}
\newcommand{\bS}{\mathbb{S}}
\newcommand{\inteu}{\hat{u}}
\newcommand{\TT}{\mathbb{T}}
\newcommand{\ddiv}{{\rm div}}
\def\cE{\mathcal{E}}
\def\cT{\mathcal{T}}
\def\R{\mathbb{R}}
\newcommand{\disp}{\displaystyle}
\newcommand\bx{\boldsymbol{x}}
\newcommand{\T}{\mathcal{T}}
\newcommand{\bq}{\begin{equation}}
\newcommand{\eq}{\end{equation}}
\long\def\comments#1{ #1}
\newcommand{\tangevec}{t}
\newcommand{\normalvec}{n}
\begin{document}

\comments{ }

\title[]{
Partial relaxation of  $C^0$ vertex continuity of stresses of conforming mixed finite elements for the elasticity problem
}


\author {Jun Hu}
\address{LMAM and School of Mathematical Sciences, Peking University,
  Beijing 100871, P. R. China.  hujun@math.pku.edu.cn}

\author{Rui Ma}
\address{LMAM and School of Mathematical Sciences, Peking University,
  Beijing 100871, P. R. China. maruipku@gmail.com}
\thanks{The authors were supported by  NSFC
projects 11625101 and 11421101}

\begin{abstract}

  \vskip 15pt
 A conforming triangular mixed  element recently proposed by Hu and Zhang for linear elasticity   is extended  by rearranging the global degrees of freedom. More precisely, adaptive meshes   $\mathcal{T}_1$, $\cdots$, $\mathcal{T}_N$ which  are successively  refined from an initial mesh $\mathcal{T}_0$ through a newest vertex bisection strategy,  admit a crucial hierarchical structure, namely, a newly added vertex $\bx_e$ of the mesh $\mathcal{T}_\ell$  is the midpoint of an edge $e$ of the coarse mesh $\mathcal{T}_{\ell-1}$. Such a hierarchical structure is explored to partially relax the $C^0$ vertex continuity of symmetric matrix-valued functions in the discrete stress space of the original element on $\mathcal{T}_\ell$ and results in an extended discrete stress space: for such an internal vertex $\bx_e$ located at the coarse edge $e$ with the unit tangential vector $t_e$ and the unit normal vector $n_e=t_e^\perp$,  the pure tangential component basis function $\varphi_{\bx_e}(\bx)t_e t_e^T$ of the original discrete stress space associated to vertex $\bx_e$ is split into two basis functions $\varphi_{\bx_e}^+(\bx)t_e t_e^T$ and $\varphi_{\bx_e}^-(\bx)t_e t_e^T$ along edge $e$, where $\varphi_{\bx_e}(\bx)$ is the  nodal basis function of the scalar-valued Lagrange element of order $k$ ($k$ is equal to the polynomial degree of the discrete stress)  on  $\mathcal{T}_\ell$ with  $\varphi_{\bx_e}^+(\bx)$ and $\varphi_{\bx_e}^-(\bx)$ denoted its two restrictions on two sides of $e$, respectively. Since the remaining two basis functions $\varphi_{\bx_e}(\bx)n_en_e^T$, $\varphi_{\bx_e}(\bx)(n_et_e^T+t_en_e^T)$  are the same as those associated to $\bx_e$ of the original discrete stress space, the number of the global basis functions  associated to $\bx_e$ of the extended discrete stress space becomes four rather than three (for the original discrete stress space).
 As a result,  though the extended discrete stress space on $\mathcal{T}_\ell$ is still a $H(\div)$  subspace,  the pure tangential component along the coarse edge $e$ of discrete stresses in it is not necessarily continuous at such vertices like $\bx_e$. A feature of this extended discrete stress space  is  its nestedness in the sense that a space on a coarse mesh $\T$ is a subspace of a space on any refinement   $\hat{\T}$ of $\T$, which allows a proof of convergence of a standard adaptive algorithm. The idea is extended to impose a general traction boundary condition on the discrete level. Numerical experiments are provided to illustrate performance on both uniform and adaptive meshes.

  \vskip 15pt

\noindent{\bf Keywords. }{
    linear elasticity, nested mixed finite element, adaptive algorithm}

 \vskip 15pt

\noindent{\bf AMS subject classifications.}
    { 65N30, 74B05.}
\end{abstract}
\maketitle

\section{Introduction}\label{sec:intro}
The problems that are most frequently solved in scientific and engineering computing may probably
be elasticity equations. The finite element method (FEM) was invented in analyzing the stress
of elastic structures in the 1950s. The mixed FEM within the Hellinger-Reissner (H-R) principle
for elasticity yields a direct stress approximation since it takes both the stress and displacement as an
independent variable; while the displacement FEM only gives an indirect stress approximation. Hu,
a founder of the celebrated Hu-Washizu principle for elasticity, pointed out that, the H-R principle is
more general than both  minimum potential and complementary energy principles, and is much
fitter for numerical solutions \cite{Hu1954}. Indeed, the mixed FEM can be free of locking
for nearly incompressible materials, and be applied to plastic materials, and approximate both
equilibrium and traction boundary conditions more accurate. However, symmetry of the stress
and  stability of the mixed FEM make  the design of the mixed FEM for elasticity surprisingly hard, which
has been regarded as a long standing open problem \cite{Arnold-Awanou-Winther}. In fact, "Four decades of searching for mixed finite elements for elasticity beginning in the 1960s did not yield any stable elements with polynomial shape
functions" [D. N. Arnold, Proceedings of the ICM, Vol. I : Plenary Lectures and Ceremonies (2002)].

Since the 1960s, many  mathematicians have worked on this problem but compromised
to weakly symmetric elements \cite{Arnold-Brezzi-Douglas, Arnold-Falk-Winther,Boffi-Brezzi-Fortin}, or composite elements \cite{Johnson-Mercier}. In 2002, using the elasticity complexes, Arnold and Winther designed the first family of
symmetric mixed elements with polynomial shape functions on triangular grids in 2D \cite{Arnold-Winther-conforming} which was extended to tetrahedral grids in 3D \cite{Arnold-Awanou-Winther} (see  also \cite{Adams}  for the first order element in three dimensions) and to rectangular grids in 2D \cite{Arnold-Awanou}.
 Recently,  the first author and his collaborators  developed a new framework to design and analyze the mixed FEM of   elasticity
equations, which yields a family of optimal symmetric mixed FEMs. In addition, those elements are very easy to implement since their basis functions, based on those of the scalar-valued Lagrange   elements,  can be  explicitly written down by hand. The main ingredients of this framework are a structure of the discrete stress space on both simplicial and product grids, two basic algebraic results, and a two-step stability analysis method,  see more details in  \cite{Hu2015trianghigh, Hu1,HuZhang2014a, HuZhang2015tre, HuZhang2015trianglow}.

However, due to the constraint of symmetry, all the  aforementioned symmetric conforming   mixed elements on triangular meshes impose  $C^0$ continuity of discrete stresses at   internal vertices. Such  $C^0$ vertex continuity causes the failure of nestedness. In fact, for an admissible mesh $\hat{\mathcal{T}}$ which is refined from a coarse mesh $\mathcal{T}$, a newly added internal vertex $\bx_e$ of $\hat{\mathcal{T}}$ is a midpoint of a  coarse edge $e$ of $\mathcal{T}$ with the unit tangential and normal vectors $t_e$ and $n_e=t_e^\perp$, respectively. All the components of a symmetric matrix-valued function in the discrete stress space  $\Sigma(\hat{\mathcal{T}})$ on $\hat{\mathcal{T}}$ are continuous at the aforementioned vertex $\bx_e$. Whereas for a  symmetric matrix-valued function $\tau$ in the discrete stress space  $\Sigma({\mathcal{T}})$ on ${\mathcal{T}}$, two components
$n_e^T\tau n_e$ and $t_e^T\tau n_e$  are continuous  while the pure tangential component $t_e^T\tau t_e$ is not necessarily continuous at this vertex. This is to say $\tau$ is not necessarily in the fine space $\Sigma(\hat{\mathcal{T}})$ and consequently
$\Sigma({\mathcal{T}})$ is not a subspace of $\Sigma(\hat{\mathcal{T}})$. Thus there is no nestedness.

There are plenty of results about the convergence analysis of   adaptive mixed FEMs  for the mixed Poisson problem \cite{Becker2008,axioms,CarstensenHoppe2006,ChenHolstXu2009,HuYu2018,HuangXu} and \cite{HuangHuangXu2011} for Kichhoff plate bending problems.  These results are established for nested finite element spaces on adaptive meshes. The non-nestedness of the discrete stress space of the elements of \cite{Adams,Arnold-Awanou-Winther, Arnold-Winther-conforming,Hu2015trianghigh, Hu1,HuZhang2014a, HuZhang2015tre, HuZhang2015trianglow} leads to mathematical difficulty of  their adaptive algorithms, based on a posteriori error estimators of the two families of  triangular mixed elements,  see \cite{Carstensen2019,CarstensenGedicke2016,ChenHuHuangMan2017}. In fact, for the convergence analysis of standard adaptive
algorithms of non-nested conforming  methods including mixed methods,  the only positive result is the paper \cite{ZhaoHuShi2010}
where the non-nestedness  is caused by  a very particular mesh-refinement strategy, while the non-nestedness herein comes from  extra continuity of functions at the internal vertices  in the discrete stress space. It is unclear  for the authors  whether the technology of \cite{ZhaoHuShi2010} can be  extended to the current case.

One purpose of this paper is to extend a conforming triangular mixed  element recently proposed in \cite{Hu2015trianghigh,HuZhang2014a} by Hu and Zhang  for linear elasticity by rearranging the global degrees of freedom.
The underlying admissible meshes $\mathcal{T}_1$, $\cdots$, $\mathcal{T}_N$  are successively  refined from an initial mesh $\mathcal{T}_0$ through a newest vertex bisection strategy \cite{Stevenson2008}. It has already been explained that these meshes admit a crucial hierarchical structure, namely, a newly added vertex $\bx_e$ of the mesh $\mathcal{T}_\ell$  is the midpoint of an edge $e$ of the coarse mesh $\mathcal{T}_{\ell-1}$. Such a hierarchical structure will be explored to partially relax $C^0$ vertex continuity of symmetric matrix-valued functions in the discrete stress space $\Sigma(\mathcal{T}_\ell)$ on $\mathcal{T}_\ell$ of the original element and results in an extended discrete stress space $\widetilde{\Sigma(\mathcal{T}_\ell)}$.
For such an internal vertex $\bx_e$ located at the coarse edge $e$ with the unit tangential vector $t_e$ and the unit normal vector $n_e=t_e^\perp$, let $\varphi_{\bx_e}(\bx)$ be the nodal basis function of the scalar-valued Lagrange element of order $k$ ($k$ is equal to the polynomial degree of the discrete stress)  on  $\mathcal{T}_\ell$ with  $\varphi_{\bx_e}^+(\bx)$ and $\varphi_{\bx_e}^-(\bx)$ denoted its two restrictions on two sides of $e$, respectively.  The  idea to extend $\Sigma(\mathcal{T}_\ell)$ is to keep  the continuity of the normal components $\tau n_e$ of a symmetric matrix-valued function
$\tau$ in $\Sigma(\mathcal{T}_\ell)$ and to split the pure tangential component $t_e^T\tau t_e$ into two parts at vertex $\bx_e$ along edge $e$. This can be accomplished by splitting the basis function $\varphi_{\bx_e}(\bx)t_et_e^T$ into two basis functions
$\varphi_{\bx_e}^+(\bx)t_et_e^T$ and $\varphi_{\bx_e}^-(\bx)t_et_e^T$ while keeping the other two basis functions
$\varphi_{\bx_e}(\bx)n_en_e^T$  and $\varphi_{\bx_e}(\bx)(n_et_e^T+t_en_e^T)$, associated to vertex $\bx_e$.
This is,  though the extended discrete stress space $\widetilde{\Sigma(\mathcal{T}_\ell)}$ on $\mathcal{T}_\ell$ is still a $H(\div)$  subspace,  the pure tangential component along the coarse edge $e$ of discrete stresses in it is not necessarily continuous at such vertices like $\bx_e$.
Therefore, the number of the global basis functions  associated to the node $\bx_e$ of the extended discrete stress space becomes four rather than three (for the original discrete stress space). A feature of this extended discrete stress space  is its nestedness in the sense that  the space $\widetilde{\Sigma(\mathcal{T}_{\ell-1})}$ is a subspace of $\widetilde{\Sigma(\mathcal{T}_\ell)}$. Then, such a nested property is used to analyze and prove the optimal convergence of a standard adaptive algorithm.

Another purpose of the paper is to impose a general traction boundary condition on the discrete level. The underlying situation is that
 a corner $\bx_c$ of the polygonal domain $\Omega$ is the unique intersected point of two boundary edges $e_1$ and $e_2$.
 Let $t_i$ and $ n_i$ denote the unit tangential and outward normal vectors of $e_i$, $i=1, 2$. A general traction boundary condition $\sigma n_i|_{e_i}$, $i=1, 2$,  imposed on both $e_1$ and $e_2$ can be inconsistent (discontinuous) in the sense that $ n_2^T\sigma n_1|_{e_1}(\bx_c)\not=  n_1^T\sigma n_2|_{e_2}(\bx_c)$. Such inconsistency (discontinuity) causes an essential difficulty for imposing such a boundary condition if the vertex degrees of freedom of the original discrete stress space are used at the corner vertex $\bx_c$. Indeed, it holds that $ n_2^T \tau  n_1|_{e_1}(\bx_c)= n_1^T\tau n_2|_{e_2}(\bx_c)$ for any $\tau$
 in $\Sigma(\mathcal{T})$ on the mesh $\mathcal{T}$ for all the elements in \cite{Arnold-Winther-conforming,Hu2015trianghigh,HuZhang2014a}. As a result, the traction boundary condition can not be exactly
  imposed even if $\sigma n_i|_{e_i}$ is a polynomial of degree not bigger than $k$ if $ n_2^T\sigma n_1|_{e_1}(\bx_c)\not=  n_1^T\sigma n_2|_{e_2}(\bx_c)$.  To handle such a case, the authors of \cite{CarstensenGunther2008} compromised to a least  square method to obtain some approximation of the traction boundary condition.  The idea to overcome such a difficulty is to split  the triangle at the corner into two sub-triangles and then relax the continuity of the pure  tangential component across the common edge of these  two sub-triangles. This eventually  leads to  four degrees of freedom at the corner vertex. As a result, the possible inconsistent traction boundary condition is able to be imposed, in particular, be exactly imposed if
  $\sigma n_i|_{e_i}$ is a polynomial of degree not bigger than $k$.
 The numerical  results on  uniform meshes and adaptive meshes for a benchmark problem over an L-shaped domain and Cook's membrane problem indicate  that such a remedy is able to largely improve accuracy of discrete stresses especially on coarse meshes.  The reason lies in that  the error caused by the inexact boundary conditions may  dominate on  coarse meshes.

Throughout this paper,
$L^2(\omega;X)$ denotes the space of functions
which are square-integrable  with domain $\omega$. For our purposes, the range
space $X$ will be either $\mathbb{S}:=\hbox{symmetric } \mathbb{R}^{d\times d},$ $\mathbb{R}^d,$ or
$\mathbb{R}$, $d=2, 3$. Let $H^m(\omega;X)$ denote the Sobolev space consisting of
functions, taking values in the
finite-dimensional vector space $X$, and with all derivatives of
order at most $m$ square-integrable, and $H({\rm div},\omega;\mathbb{S})$
consist of square-integrable symmetric matrix fields with
square-integrable divergence. Let $\|\cdot\|_{m,\omega}$ represent the norm and $|\cdot|_{m,\omega}$ represent the seminorm of $H^m(\omega)$, and  $(\cdot,\cdot)_{\omega}$ represent as usual, the $L^2$ inner product on the domain $\omega$, the subscript $\omega$ is omitted when $\omega=\Omega$. $\langle\cdot,\cdot\rangle_\Gamma$ represents the $L^2$ inner product on the boundary $\Gamma$.
For $\phi\in H^1(\Omega;\mathbb{R})$, $v=(v_1,v_2)^T\in H^1(\Omega;\mathbb{R}^2)$, set
\begin{equation*}
  {\rm\boldsymbol{Curl}}\phi:=(-\partial\phi/\partial x_2,\partial\phi/\partial x_1),\;
  {\rm\boldsymbol{Curl}}v:=\begin{pmatrix}
                             -\partial v_1/\partial x_2 & \partial v_1/\partial x_1  \\
                             -\partial v_2/\partial x_2  & \partial v_2/\partial x_1  \\
                           \end{pmatrix}.
\end{equation*}
 For $v=(v_1,v_2)^T\in H^1(\Omega;\mathbb{R}^2)$ and $\tau=(\tau_{ij})_{2\times2}$, set
\begin{equation*}
  {\rm curl}v:=\partial v_2/\partial x_1-\partial v_1/\partial x_2,\;{\rm curl}\tau=\begin{pmatrix}
             \partial \tau_{12}/\partial x_1-\partial \tau_{11}/\partial x_2 \\
                          \partial \tau_{22}/\partial x_1-\partial \tau_{21}/\partial x_2 \\
                                                                                    \end{pmatrix}.
\end{equation*}
The notation $A\lesssim B$ abbreviates $A\leq CB$ for a mesh-size independent constant $C>0$. The context depending symbol $|\bullet|$ denotes the area of a domain, the length of a edge, the counting measure (cardinality) of a set and the absolute value of a real number.

The rest of the paper is organized as follows.   Section \ref{sec:Prelim} introduces notation in this paper and the mixed element on triangular meshes \cite{Hu2015trianghigh,HuZhang2014a}, including its degrees of freedom and basis functions.   Section \ref{sec:Adap} designs nested mixed finite elements on adaptive meshes by partially relaxing the $C^0$ vertex continuity of discrete stresses at newly added internal vertices of refined meshes and  proves  optimality of the corresponding adaptive algorithm.  In Section \ref{sec:allcorner},   the $C^0$ vertex continuity is relaxed at corner vertices of domain $\Omega$  so that an inconsistent traction boundary condition on $\Gamma_N$ can be imposed. This section will give some comments to the three dimensional case. Numerical experiments are presented in Section~\ref{Numerics}.

\section{Preliminaries}\label{sec:Prelim}This section  introduces the stress-displacement  formulation  for   linear elasticity and the mixed FEM  from \cite{Hu2015trianghigh,HuZhang2014a}.
\subsection{Mixed formulation }
Let $\Omega\subset\mathbb{R}^2$ be a simply-connected bounded polygonal domain with boundary $\Gamma:=\partial\Omega=\Gamma_D\cup\Gamma_N$, $\Gamma_D\cap\Gamma_N=\emptyset$.
Given $f\in V:=L^2(\Omega;\mathbb{R}^2)$, $u_D\in H^1(\Omega;\mathbb{R}^2)$ and $g\in L^2(\Gamma_N;\mathbb{R}^2)$, the linear elasticity problem
with mixed boundary conditions within a stress-displacement formulation
reads: Seek $(\sigma,u)\in\Sigma_g\times V$ such that
\an{\left\{ \ad{
  (A\sigma,\tau)+({\rm div}\tau,u)&= \langle u_D,\tau \normalvec\rangle_{\Gamma_D} && \hbox{for all \ } \tau\in\Sigma_0,\\
   ({\rm div}\sigma,v)&= (f,v) &\qquad& \hbox{for all \ } v\in V}
   \right.\lab{eqn1}
}
with
\begin{equation*}
W:=\{v\in H^1(\Omega;\mathbb{R}^2)\ |\ v|_{\Gamma_D}=0\},
\end{equation*}
\begin{equation*}
  \Sigma_g:=\{\sigma\in H(\ddiv,\Omega;\mathbb{S})\ |\ \langle\psi,\sigma \normalvec\rangle_{\Gamma_N}=\langle\psi,g\rangle_{\Gamma_N}\text{ for all }\psi\in W\}.
\end{equation*}
Here $\Sigma_0:=\Sigma_g$ with $g\equiv0$ and  $n$ denotes the outnormal of $\partial\Omega$.
The compliance tensor $A:\mathbb{S}\rightarrow\mathbb{S}$, characterizing the properties of the material, is symmetric positive definite and its eigenvalues are uniformly bounded from above. In the homogeneous isotropic case, the compliance tensor is given by $A\tau=\big(\tau-\lambda/(2\mu+2\lambda){\rm tr}\tau {\rm I}\big)/(2\mu)$ with the Lam\'{e} constants $\mu>0$ and $\lambda\geq 0$, the identity matrix ${\rm I}$ and the trace ${\rm tr}\tau$ of the matrix $\tau$. For simplicity, this paper assumes $A$ is a constant tensor.
\subsection{Triangulations}\label{sec:triangulation}
 Given an initial shape-regular triangulation $\cT_0$ of $\Omega$ into triangles, let $\mathbb{T}:=\mathbb{T}(\cT_0)$ denote the set of all admissible regular triangulations obtained from $\T_0$ with a finite number of successive bisections of triangles by the newest vertex bisection \cite{Stevenson2008}. Given $\T\in\mathbb{T}$, let $\hat{\T}$ denote a refinement of $\T$, and $\T\setminus\hat{\T}:=\{K\in\T\ |\ K\not\in\hat{\T}\}$ denote  the set of refined elements from $\T$ to $\hat{\T}$.
  Let $h_K:=|K|^{1/2}$ and $h=\max_{K\in\hat{\T}}h_K$. Denote by  $\hat{\mathcal{E}}$  (resp. $\hat{\mathcal{E}}(\Omega)$ and  $\hat{\mathcal{E}}(\Gamma)$) the collection of all (resp. interior and boundary) element edges of $\hat{\T}$. For any triangle $K\in\hat{\T}$, let $\mathcal{E}(K)$ denote the set of its edges. For any edge $e\in\hat{\mathcal{E}}$,  let  $t_e$ denote the unit tangential vector and let  $n_e:=t_e^\perp$ denote the unit normal vector. If $e\in\hat{\cE}(\Gamma)$, $n_e=n$ is the outward unit normal. The jump $[w]_e$ of $w$ across edge $e=K_1\cap K_2$ reads
\begin{equation*}
  [w]_e:=(w|_{K_1})|_e-(w|_{K_2})|_e.
\end{equation*}
Particularly, if $e\in\hat{\mathcal{E}}(\Gamma)$, $ [w]_e:=w|_e$.  Let $\mathcal{V}(\hat{\T})$ (resp. $\mathcal{V}_0(\hat{\T})$) denote the set of  all (resp. internal) vertices of  $\hat{\T}$.    The newest-vertex bisection (NVB) (see  more details in \cite{Stevenson2008}) creates
 each  new   vertex $\bx_e \in\mathcal{V}(\hat{\T})\setminus\mathcal{V}(\T_0)$ as a  midpoint of some   edge $e $ associated with tangential  vector  $ \tangevec_e$ and normal vector  $ \normalvec_e$.  Given any internal node $\bx_e  \in\mathcal{V}_0(\hat{\T})\setminus\mathcal{V}(\T_0)$, define the two patches $\omega^+_{\bx_e}$ and $\omega^-_{\bx_e}$ by \begin{align}\label{KPos}
\begin{aligned}
\omega^+_{\bx_e}:&=\cup\{K\ |\ K\in\hat{\T}, \bx_e\in K,  ({\rm mid}(K)-\bx_e)\cdot n_{e}>0\},\\
\omega^-_{\bx_e}:&=\cup\{K\ |\ K\in\hat{\T},  \bx_e\in K,({\rm mid}(K)-\bx_e)\cdot n_{e}<0\}.
\end{aligned}
\end{align}

Given any integer $k\geq 0$, let $P_k(\omega;X)$ denote the space of polynomials over $\omega$ of total degree not greater than $k$, taking values in the finite-dimensional vector space X.  Let $\bx_i,\  1\leq i\leq 3 $ denote the vertices of $K\in\hat{\T}$, $\lambda_i$ denote the barycentric coordinates with respect to $\bx_i$, and $t_{i,j}=\bx_j-\bx_i$ denote the tangential vector of edge $\bx_i\bx_j$.

\subsection{Mixed finite element method}\label{sec:orignalMFEM}
Given $K\in\hat{\T}$,
 with symmetric matrices $\mathbb{S}_{i,j}:=t_{i,j}t_{i,j}^T$, $1\leq i< j\leq 3$ of rank one, define the following  space
 \cite{Hu2015trianghigh,HuZhang2014a}:
 \begin{equation*}
   \Sigma_{k,b}(K):=\sum_{1\leq i<j\leq 3}\lambda_i\lambda_jP_{k-2}(K;\mathbb{R})\mathbb{S}_{i,j}.
 \end{equation*}
 Note that  for any function $\tau\in\Sigma_{k,b}(K)$ and  any edge $e$ of $K$  with the normal vector $n_e$, the normal components $\tau n_e|_{e}$ vanish. This is to say $\Sigma_{k,b}(K)$ is  an $H({\rm div},K;\mathbb{S})$ bubble function space on $K$.
 With such a bubble function space on each element of $\hat{\T}$, one can define a discrete stress space $\Sigma(\hat{\T})$ with a simple and crucial structure on $\hat{\T}$, i.e, for $k\geq 3$,
 \begin{equation}\label{Def:Odisstress}
 \begin{split}
   \Sigma(\hat{\T}):=&\big\{\sigma\in H({\rm div},\Omega;\mathbb{S})\ |\ \sigma=\sigma_c+\sigma_b,\sigma_c\in H^1(\Omega;\mathbb{S}),\\
   &\forall K\in\hat{\T}, \sigma_c|_K\in P_k(K;\mathbb{S}),\sigma_b|_K\in\Sigma_{k,b}(K) \big\}.
   \end{split}
 \end{equation}
 Note that $\sigma_c$ is a symmetric matrix-valued $H^1$ Lagrange element function, this is, each component of $\sigma_c$ is
 a scalar-valued   Lagrange element function on $\hat{\T}$. This is, the discrete stress space $\Sigma(\hat{\T})$ is a sum of the symmetric matrix-valued $H^1$ Lagrange element function space and these $H(\rm div)$ bubble function spaces.  While the bubble function space $\Sigma_{k,b}(K)$ on element $K$ plays
a very important role in the two step stability analysis of the element, see  \cite{Hu2015trianghigh,HuZhang2014a} for more details.

Next, the degrees of freedom will be presented for the stress shape function space $P_k(K;\mathbb{S})$ on element $K$. Indeed,
 a symmetric matrix field $\tau\in P_k(K;\mathbb{S})$ can be uniquely determined by the degrees of freedom from (1), (2) and (3) (see Figure~\ref{fig:lowcaseN}  with solid points and arrows for $k=3$) \cite{Hu2015trianghigh}:
 \begin{enumerate}
   \item the values of  $\tau$ at three vertices,
   \item for edge $e$, the mean moments of degree at most $k-2$ over $e$ of $n_e\tau n_e^T$, $t_e^T\tau n_e$,
   \item the values  $\int_K\tau:\xi\,dx$ for any $\xi\in\Sigma_{k,b}(K)$.
 \end{enumerate}
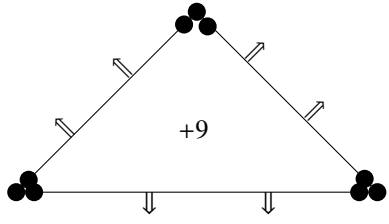
\begin{figure}[h!]
 {\setlength{\unitlength}{2.4cm}
\begin{picture}(2,1)
\put(0,0){\line(1,0){2}}
\put(0,0){\line(1,1){1}}
\put(2,0){\line(-1,1){1}}
\put(0,0){\circle*{0.1}}  \put(0.07,0.07){\circle*{0.1}}  \put(0.1,0){\circle*{0.1}}
\put(2,0){\circle*{0.1}}  \put(1.9,0){\circle*{0.1}}  \put(1.93,0.075){\circle*{0.1}}
 \put(1,1){\circle*{0.1}} \put(0.93,0.93){\circle*{0.1}} \put(1.06,0.92){\circle*{0.1}}
  \put(0.90,0.3){$ +9$}
\put(0.21,0.33){\footnotesize $\Nwarrow$}
\put(0.53,0.66){\footnotesize $\Nwarrow$}
 \put(0.66,-0.1){ $\Downarrow$}
 \put(1.32,-0.1){ $\Downarrow$}
\put(1.26,0.75){\footnotesize $\Nearrow$}
\put(1.59,0.42){\footnotesize $\Nearrow$}
\end{picture}}
 {\setlength{\unitlength}{1cm}}
\caption{Degrees of freedom for $\Sigma(\hat{\T})$ of $k=3$}\label{fig:lowcaseN}
\end{figure}

For ease implementation, the global basis functions of $\Sigma(\hat{\T})$ when $k=3$ will be presented, which needs the nodal basis functions of the scalar-valued Lagrange element of order $3$, which, on element $K$,  read as follows
\begin{equation*}
\begin{split}
\varphi_0(\bx)&=27\lambda_1\lambda_2\lambda_3,\\
\varphi_i(\bx)&=\frac{9}{2}\lambda_i(\lambda_i-\frac{1}{3})(\lambda_i-\frac{2}{3}), i=1, 2,3,\\
\varphi_{ij}(\bx)&=\frac{27}{2}\lambda_{i+1}\lambda_{i+2}(\lambda_{i+j}-\frac{1}{3}), i=1,2,3,j=1,2.
\end{split}
\end{equation*}
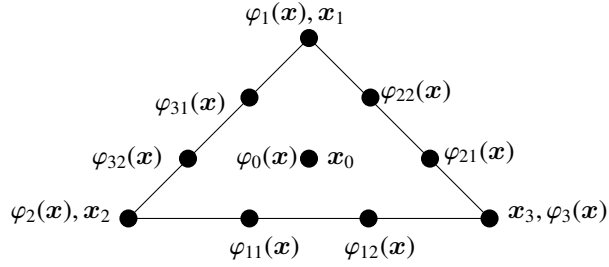
\begin{figure}[h!]
 {\setlength{\unitlength}{2.40cm}
\begin{picture}(2,1.3)
\put(0,0){\line(1,0){2}}
\put(-0.65,0.0){$\varphi_2(\bx), \bx_2$}
\put(0,0){\line(1,1){1}}
\put(2,0){\line(-1,1){1}}
\put(2.1,0.0){$\bx_3, \varphi_3(\bx)$}
\put(0,0){\circle*{0.1}} 
\put(2,0){\circle*{0.1}}  
 \put(1,1){\circle*{0.1}} 
\put(0.65,1.1){$\varphi_1(\bx), \bx_1$}
  \put(1.0,0.33){\circle*{0.1}}
\put(-0.20,0.3){$\varphi_{32}(\bx)$}
\put(0.33,0.33){\circle*{0.1}}
\put(0.67,0.67){\circle*{0.1}}
\put(0.15,0.60){$\varphi_{31}(\bx)$}
\put(1.33,0){\circle*{0.1}}
\put(0.67,0){\circle*{0.1}}
\put(1.2,-0.2){$\varphi_{12}(\bx)$}
\put(0.56,-0.2){$\varphi_{11}(\bx)$}
\put(1.34,0.67){\circle*{0.1}}
\put(1.67,0.33){\circle*{0.1}}
\put(1.4,0.67){$\varphi_{22}(\bx)$}
\put(1.75,0.33){$\varphi_{21}(\bx)$}
 \put(0.60,0.3){$\varphi_{0}(\bx)$}
 \put(1.1,0.3){$\bx_0$}
\end{picture}
}
\\
\ \\
\caption{Ten nodal basis functions of the Lagrange element of order $3$ on element $K$.}\label{Lagrange}
\end{figure}

Two sets of basis of the symmetric matrix space $\mathbb{S}$ are required as well. The first set of basis is the canonical basis of $\mathbb{S}$:
\begin{equation}
  \mathbb{S}_1=\p{1 & 0\\ 0&0}, \mathbb{S}_2=\p{0& 1\\1 &0}, \text{ and }\mathbb{S}_3=\p{0&0\\0 &1}.
  \end{equation}
The second set of basis of $\mathbb{S}$ is defined associated to each edge. More precisely, given edge $e_i$ of element $K$ , define  $\mathbb{S}_{e_i}:=t_{e_i}t_{e_i}^T$ and $\mathbb{S}_{e_i,m}^\perp$ with $m=1, 2$ such that
\begin{equation}
\mathbb{S}_{e_i, 1}^\perp:=n_{e_i}n_{e_i}^T, \mathbb{S}_{e_i, 2}^\perp:=n_{e_i}t_{e_i}^T+t_{e_i}n_{e_i}^T,\text{ therefore, }\mathbb{S}_{e_i, m}^\perp: \mathbb{S}_{e_i}=0
 \text{ and }\mathbb{S}_{e_i, 1}^\perp: \mathbb{S}_{e_i, 2}^\perp=0.
\end{equation}
Then, the global basis functions of the discrete stress space associated to element $K$ are as follows:
\begin{equation}\label{eq:bas}
\begin{split}
\theta_{ij}&=\varphi_i(\bx)\mathbb{S}_j, i=0, 1,2,3, j=1,2,3,\\
\alpha_{ij}&=\varphi_{ij}(\bx)|_K\mathbb{S}_{e_i}, i=1,2,3,j=1,2,\\
\beta_{ijm}&=\varphi_{ij}(\bx)\mathbb{S}_{e_i,m}^\perp, i=1,2,3,j=1,2, m=1,2.
\end{split}
\end{equation}
Here $\varphi_{ij}(\bx)|_K$ denotes the restriction of $\varphi_{ij}(\bx)$ on element $K$. 
Note that the basis function $\alpha_{ij}$ is an $H(\rm div)$ bubble function in the sense that $\alpha_{ij} n_{e}|_{e}=0$ on any edge $e$ of element $K$. Furthermore, if $e_i$ is a common edge of element $K$ and element $K^\prime$, the corresponding $H(\rm div)$ bubble function with respect to element $K^\prime$ is $\alpha_{ij}^\prime=\varphi_{ij}(\bx)|_{K^\prime}\mathbb{S}_{e_i}$ which is also a basis function of $\Sigma(\hat{\T})$ and independent of $\alpha_{ij}$. It is stressed that a basis function of $\Sigma(\hat{\T})$ is the product of a scalar-valued Lagrange element basis function (or the corresponding restriction on an element ) and a basis of the symmetric matrix space $\mathbb{S}$.

 The displacement space is the full $C^{-1}$-$P_{k-1}$ space
 \begin{equation}\label{displacementspace}
   V(\hat{\T}): =\{v\in L^2(\Omega;\mathbb{R}^2)\ |\ v|_K\in P_{k-1}(K;\mathbb{R}^2)\text{ for all }K\in\hat{\T}\}.
 \end{equation}
Suppose $g({\hat{\T}}): =\alpha({\hat{\T}}) n|_{\Gamma_N}$ for some $\alpha({\hat{\T}})\in\Sigma(\hat{\T})$ denotes some approximation to $g$.  The mixed FEM to \eqref{eqn1}  seeks $(\sigma(\hat{\T}),u(\hat{\T}))\in \Sigma(\hat{\T})\cap\Sigma_{g(\hat{\T})}\times V(\hat{\T})$ such that
 \an{\left\{ \ad{
  (A\sigma(\hat{\T}),\tau(\hat{\T}))+({\rm div}\tau(\hat{\T}),u(\hat{\T}))&= \langle u_D,\tau(\hat{\T}) n\rangle_{\Gamma_D} && \hspace{-5mm}\hbox{for all \ } \tau(\hat{\T})\in\Sigma(\hat{\T})\cap\Sigma_{0},\\
   ({\rm div}\sigma(\hat{\T}),v(\hat{\T}))&= (f,v(\hat{\T})) &\qquad&\hspace{-5mm} \hbox{for all \ } v(\hat{\T})\in V(\hat{\T}). }
   \right.\lab{disMixElas}
}
\section{Adaptive mixed finite element methods}\label{sec:Adap} Due to the vertex $C^0$ continuity   of functions  in  $\Sigma(\hat{\T})$  from  \eqref{Def:Odisstress}, as it was explained in the introduction that  the discrete stress space  is non-nested in the sense that a coarse space $\Sigma(\T)$ is not a subspace of a fine space $\Sigma(\hat{\T})$ when $\hat{\T}$ is an admissible refinement of $\T$ by NVB.
 In fact, a newly added internal vertex $\bx_e$ of $\hat{\mathcal{T}}$ is a midpoint of an coarse edge $e$ of $\mathcal{T}$ with the unit tangential and normal vectors $t_e$ and $n_e=t_e^\perp$, respectively. 
 The vertex $C^0$ continuity  implies that  all the components of  functions in   $\Sigma(\hat{\mathcal{T}})$ are continuous at  vertex $\bx_e$. However, for a  function $\tau$ in  $\Sigma({\mathcal{T}})$, its pure tangential component $t_e^T\tau t_e$ is not necessarily continuous at this vertex. This is to say $\tau$ is not necessarily in the fine space $\Sigma(\hat{\mathcal{T}})$ and consequently $\Sigma({\mathcal{T}})$ is not a subspace of $\Sigma(\hat{\mathcal{T}})$.  With such  non-nestedness, it is  actually very difficult to analyze the optimal convergence of the corresponding  adaptive algorithm. In the adaptive algorithm, the underlying admissible meshes $\mathcal{T}_1$, $\cdots$, $\mathcal{T}_N$, based on some a posteriori error estimate, see for instance \cite{ChenHuHuangMan2017},   are successively  refined from an initial mesh $\mathcal{T}_0$ through a NVB strategy \cite{Stevenson2008}. Since $\Sigma(\T_\ell)$ is usually not a subspace of $\Sigma(\T_m)$ with $m>\ell$, it is essentially difficult to show some orthogonality or quasi-orthogonality which is a main ingredient for the optimal convergence
 analysis of adaptive algorithms \cite{axioms,HuYu2018}. For the non-nestedness caused by the extra smoothness of the underlying finite element space, there is no convergence analysis of the corresponding adaptive algorithm in the literature so far.

 This section relaxes the $C^0$ vertex continuity of functions in $\Sigma(\hat{\T})$ to design  an extended stress space $\widetilde{\Sigma(\hat{\T})}$ when $\hat{\T}$ is a refinement of $\T$ by NVB.
 For the admissible nested meshes $\mathcal{T}_0$, $\mathcal{T}_1$, $\cdots$, $\mathcal{T}_N$ from the adaptive algorithm, 
 this leads to a sequence of nested spaces $\Sigma(\T_0)$,  $\widetilde{\Sigma({\T_1})}$, $\cdots$, $\widetilde{\Sigma({\T_N})}$.
 The optimal convergence of the adaptive algorithm based on these nested spaces will be proved, adopted to the unified analysis from \cite
{HuYu2018}.  For simplicity, this section only considers the homogeneous Dirichlet boundary condition $\Gamma_D=\Gamma$ and $u_D\equiv0$.

\subsection{Extended stress space  on adaptive meshes}\label{sec:extend}The extended stress space $\widetilde{\Sigma(\hat{\T})}$ extends the global degrees of freedom (1) of  $\Sigma(\hat{\T})$ from Subset.~\ref{sec:orignalMFEM}  to make it hierarchical and defined for an admissible triangulation $\hat{\T}\in\TT$ as follows. Recall that any internal vertex $\bx_e\in\mathcal{V}_0(\hat{\T})\setminus\mathcal{V}(\T_0)$ ($e$ is a coarse edge and $\bx_e$ is its midpoint) is associated to the separation $\omega_{\bx_e}^+$ and $\omega_{\bx_e}^-$  of the neighbouring triangles with vertex $\bx_e$. Instead of the continuity of all components of $\tau$ at $\bx_e$, the component  $\tangevec_{ e}^T\tau\tangevec_{ e}$  at $\bx_e$ is not uniquely defined at $\bx_e$ but allows for one value  in $\omega_{\bx_e}^+$ and a second value in $\omega_{\bx_e}^-$. Meanwhile,  the vertex-based basis functions associated to $\bx_e$ are enriched  to  four basis functions:
\begin{equation}\label{eq:extbas}
  \varphi_{\bx_e}(\bx)n_{ e}n_{ e}^T,\; \varphi_{\bx_e}(\bx)(n_{ e}t_{ e}^T+t_{ e}n_{ e}^T)
,\; \tau_{\bx_e}^+:=\varphi_{\bx_e}^+(\bx)t_{ e}t_{ e}^T\text{ and } \tau_{\bx_e}^-:=\varphi_{\bx_e}^-(\bx)t_{ e}t_{ e}^T .
    \end{equation}
Here $\varphi_{\bx_e}^+(\bx)=\varphi_{\bx_e}(\bx)$ for $\bx\in\omega_{\bx}^+$ and otherwise vanishes, and $\varphi_{\bx_e}^-(\bx)$ is similarly defined for $\omega_{\bx_e}^-$.

Let $E(\hat{\T}):={\rm span}_{\bx_e\in\mathcal{V}_0(\hat{\T})\setminus\mathcal{V}(\mathcal{T}_0)}\{ \tau_{\bx_e}^+, \tau_{\bx_e}^-\}$.
The extended  stress space  is then defined by
\begin{align}\label{eq:exspace}
\widetilde{\Sigma(\hat{\T})}:=\Sigma(\hat{\T})+E(\hat{\T}).
\end{align}
Note that for any $\tau(\hat{\T})\in E(\hat{\T})$ and any $e\in\hat{\mathcal{E}}(\Omega)$, the normal components   $\tau(\hat{\T})\normalvec_e$ are  continuous across $e$. This guarantees  $ E(\hat{\T})\subset H(\ddiv,\Omega;\mathbb{S})$ and thus $\widetilde{\Sigma(\hat{\T})}\subset  H(\ddiv,\Omega;\mathbb{S})$.
The displacement is  approximated by  $V(\hat{\T})$ in \eqref{displacementspace}. Then the extended mixed FEM to \eqref{eqn1}  with homogeneous Dirichlet boundary condition seeks $(\sigma(\hat{\T}),u(\hat{\T}))\in \widetilde{\Sigma(\hat{\T})} \times V(\hat{\T})$ such that
 \an{\lab{disMixElasextend}\left\{ \ad{
  (A\sigma(\hat{\T}),\tau(\hat{\T}))+({\rm div}\tau(\hat{\T}),u(\hat{\T}))&=0&& \hbox{for all \ } \tau(\hat{\T})\in\widetilde{\Sigma(\hat{\T})} ,\\
   ({\rm div}\sigma(\hat{\T}),v(\hat{\T}))&= (f,v(\hat{\T})) &\qquad& \hbox{for all \ } v(\hat{\T})\in V(\hat{\T}). }
   \right.
}
For ease of notation, throughout this section let $(\sigma(\hat{\T}),u(\hat{\T}))\in \widetilde{\Sigma(\hat{\T})} \times V(\hat{\T})$ denote the solution to \eqref{disMixElasextend} instead of \eqref{disMixElas}.
\begin{theorem}For $k\geq3$,
the extended discrete problem \eqref{disMixElasextend} has a unique solution $(\sigma(\hat{\T}),u(\hat{\T}))\in \widetilde{\Sigma(\hat{\T})} \times V(\hat{\T})$ and the error estimate holds
\begin{align}\label{eq:errest}
\|\sigma-\sigma(\hat{\T})\|_{H(\ddiv)}+\|u-u(\hat{\T})\|_0\leq Ch^k(\|\sigma\|_{k+1}+\|u\|_k).
\end{align}
\end{theorem}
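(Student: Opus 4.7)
The plan is to deduce well-posedness of \eqref{disMixElasextend} from the standard Brezzi theory, leveraging the fact that the new space is an enlargement of the original Hu--Zhang space $\Sigma(\hat{\T})$, for which the two Brezzi conditions are already known from \cite{Hu2015trianghigh,HuZhang2014a}. The error estimate then follows from Brezzi's quasi-optimality and the approximation properties of $\Sigma(\hat{\T})$.

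First I would verify the compatibility condition $\ddiv \widetilde{\Sigma(\hat{\T})} \subseteq V(\hat{\T})$. For $\tau \in \Sigma(\hat{\T})$ this is classical. For the new generators $\tau_{\bx_e}^{\pm}$, the function $\varphi_{\bx_e}^\pm(\bx)\,\tangevec_e\tangevec_e^T$ is piecewise $P_k$ on the two disjoint patches $\omega_{\bx_e}^+,\omega_{\bx_e}^-$ and vanishes outside them; on each triangle $K\in\hat{\T}$ its divergence is therefore a $P_{k-1}$ vector field, i.e.\ an element of $V(\hat{\T})$. The remark already given in the excerpt ensures $E(\hat{\T})\subset H(\ddiv,\Omega;\mathbb{S})$, so $\widetilde{\Sigma(\hat{\T})}\subset H(\ddiv,\Omega;\mathbb{S})$ with divergence in $V(\hat{\T})$.

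Next I would check the two Brezzi conditions. For the discrete inf--sup condition, given any $v(\hat{\T})\in V(\hat{\T})$, the original Hu--Zhang element produces $\tau\in\Sigma(\hat{\T})$ with $\ddiv\tau=v(\hat{\T})$ and $\|\tau\|_{H(\ddiv)}\lesssim\|v(\hat{\T})\|_0$; since $\Sigma(\hat{\T})\subseteq\widetilde{\Sigma(\hat{\T})}$, the inf--sup condition transfers verbatim to the enlarged space. For the kernel coercivity, any $\tau\in\widetilde{\Sigma(\hat{\T})}$ with $(\ddiv\tau,v)=0$ for all $v\in V(\hat{\T})$ satisfies $\ddiv\tau=0$ in view of the compatibility established above; positive definiteness of $A$ then gives $(A\tau,\tau)\gtrsim\|\tau\|_0^2=\|\tau\|_{H(\ddiv)}^2$. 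Together these yield the existence and uniqueness of $(\sigma(\hat{\T}),u(\hat{\T}))$ by the standard Babu\v{s}ka--Brezzi theorem.

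Finally, Brezzi's quasi-optimality provides
\[
\|\sigma-\sigma(\hat{\T})\|_{H(\ddiv)}+\|u-u(\hat{\T})\|_0
\lesssim \inf_{\tau\in\widetilde{\Sigma(\hat{\T})}}\|\sigma-\tau\|_{H(\ddiv)}
+\inf_{v\in V(\hat{\T})}\|u-v\|_0.
\]
Since $\Sigma(\hat{\T})\subset\widetilde{\Sigma(\hat{\T})}$, the infimum over $\widetilde{\Sigma(\hat{\T})}$ is bounded by the infimum over $\Sigma(\hat{\T})$, for which the canonical interpolant of \cite{Hu2015trianghigh,HuZhang2014a} yields the $O(h^k)$ bound $h^k(\|\sigma\|_{k+1}+\|\ddiv\sigma\|_k)$, while approximation by $P_{k-1}$ on each triangle gives $h^k\|u\|_k$ for the displacement. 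Combining these two estimates with $\ddiv\sigma=-f$ and bootstrapping smoothness yields \eqref{eq:errest}. The only nonroutine step is the compatibility check $\ddiv E(\hat{\T})\subseteq V(\hat{\T})$; once it is in place, everything else is inherited from the Hu--Zhang analysis, so I do not expect a genuine obstacle.
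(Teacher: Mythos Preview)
Your proposal is correct and follows essentially the same route as the paper: the paper's proof observes that $\ddiv\Sigma(\hat{\T})=V(\hat{\T})$ together with $\ddiv E(\hat{\T})\subset V(\hat{\T})$ yields $\ddiv\widetilde{\Sigma(\hat{\T})}=V(\hat{\T})$, and then invokes $\Sigma(\hat{\T})\subset\widetilde{\Sigma(\hat{\T})}$ plus \cite[Prop.~5.4.1]{Boffi-Brezzi-Fortin} to transfer both well-posedness and the error estimate from the original Hu--Zhang pair. You simply unpack that citation into the explicit Brezzi inf--sup, kernel coercivity, and quasi-optimality steps, so the two arguments coincide.
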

\begin{proof}The  well-posedness and the  error estimate  of    \eqref{disMixElas} can be found in \cite{Hu2015trianghigh,HuZhang2014a}.  Since $\ddiv\Sigma(\hat{\T})=V(\hat{\T})$ and   $\ddiv E(\hat{\T})\subset V(\hat{\T})$ imply $\ddiv\widetilde{\Sigma(\hat{\T})}=V(\hat{\T})$,  the combination with $\Sigma(\hat{\T})\subset \widetilde{\Sigma(\hat{\T})}$ leads to the well-posedness of \eqref{disMixElasextend} and its error estimate \eqref{eq:errest}  (see e.g. \cite[Prop.~5.4.1]{Boffi-Brezzi-Fortin}).
\end{proof}
\begin{theorem}[nestedness]Given any $\T\in\TT$
and its refinement $\hat{\T}$,  the  extended stress space with respect to $\T$ satisfies $\widetilde{\Sigma(\T)}\subset\widetilde{\Sigma(\hat{\T})}$.
\end{theorem}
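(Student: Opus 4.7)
The plan is to decompose an arbitrary $\tau\in\widetilde{\Sigma(\T)}=\Sigma(\T)+E(\T)$ into its two natural summands and handle each separately, exploiting the structural NVB fact that every new vertex of $\hat{\T}$ lying on a coarse edge of $\T$ is automatically a split vertex of $\widetilde{\Sigma(\hat{\T})}$ carrying the correct tangent direction.

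\textbf{Part I} ($\Sigma(\T)\subset\widetilde{\Sigma(\hat{\T})}$). For $\sigma=\sigma_c+\sigma_b\in\Sigma(\T)$ I would locate its vertex jumps on $\hat{\T}$. A new vertex $\bx\in\mathcal{V}_0(\hat{\T})\setminus\mathcal{V}(\T)$ in the interior of a coarse triangle contributes no jump (single polynomial). For $\bx$ in the interior of a coarse edge $e\subset\partial K\cap\partial K'$, the $H^1$-piece $\sigma_c$ contributes nothing, and of the three summands $\lambda_i\lambda_j P_{k-2}\mathbb{S}_{ij}$ defining $\Sigma_{k,b}(K)$ only the one with $\mathbb{S}_{ij}=t_e t_e^T$ survives at such $\bx$ (since $\lambda_l(\bx)=0$ for the vertex $l$ of $K$ opposite to $e$), so the jump has the form $\alpha_\bx t_e t_e^T$ for some $\alpha_\bx\in\mathbb{R}$. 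Setting $\eta:=\sum\alpha_\bx\hat{\tau}_{\bx}^+\in E(\hat{\T})$, where $\hat{\tau}_{\bx}^+$ denotes the fine split basis at $\bx$, cancels every vertex jump of $\sigma$.

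\textbf{Part II} ($E(\T)\subset\widetilde{\Sigma(\hat{\T})}$). Take $\bx\in\mathcal{V}_0(\T)\setminus\mathcal{V}(\T_0)$ and the coarse basis $\tau_\bx^+=\varphi_\bx^+(\T)t_\bx t_\bx^T$. Jumps across fine edges along the coarse segment $e_\bx$ are scalar multiples of $t_\bx t_\bx^T$ and kill $n_\bx$, so $\tau_\bx^+\in H(\ddiv,\Omega;\mathbb{S})$. Vertex jumps occur only at vertices $\bx_i$ of $\hat{\T}$ on $e_\bx$: if $\bx_i\in\mathcal{V}_0(\hat{\T})\setminus\mathcal{V}(\T_0)$ the NVB history forces $\bx_i$ to arise by bisecting an edge along $e_\bx$, whence $t_{\bx_i}=t_\bx$; if $\bx_i\in\mathcal{V}(\T_0)$ then $\bx_i$ is an endpoint of $e_\bx$ distinct from $\bx$ and $\varphi_\bx(\T)(\bx_i)=0$. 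Setting $\eta:=\sum_{\bx_i}\varphi_\bx(\T)(\bx_i)\,\hat{\tau}_{\bx_i}^+\in E(\hat{\T})$ and using $\hat{\varphi}_{\bx_i}(\bx_j)=\delta_{ij}$ at the fine nodes, $\tau_\bx^+-\eta$ is continuous at every vertex of $\hat{\T}$; the case of $\tau_\bx^-$ is symmetric.

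\textbf{Closing step.} In both parts the remainder $\zeta$ is an $H(\ddiv)$ piecewise-$P_k$ symmetric matrix field that is continuous at every vertex of $\hat{\T}$; I would show $\zeta\in\Sigma(\hat{\T})$ by constructing the Lagrange--bubble decomposition directly. Define $\sigma_c\in H^1(\Omega;\mathbb{S})$ as the fine continuous $P_k$ Lagrange function whose vertex nodal values equal the common value of $\zeta$ and whose interior-edge node values are chosen so that $\sigma_c n_e=\zeta n_e$ holds on each fine edge $e$ (the two components $n_e^T\sigma_c n_e$ and $t_e^T\sigma_c n_e$ at the node are determined, while $t_e^T\sigma_c t_e$ remains free). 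Then $\zeta-\sigma_c$ has vanishing normal trace on every edge of every $K\in\hat{\T}$, and by the identity $\{\tau\in P_k(K;\mathbb{S}):\tau n_e|_e=0\text{ for every edge }e\text{ of }K\}=\Sigma_{k,b}(K)$ (a dimension count against $\dim P_k(K;\mathbb{S})=3\binom{k+2}{2}$, see \cite{Hu2015trianghigh,HuZhang2014a}) it is piecewise in the bubble space. The main obstacle is Part II: one must verify carefully, using the hierarchical NVB structure, that every new fine vertex on a coarse edge $e_\bx$ inherits tangent direction $t_\bx$ and that the coarse nodal coefficients $\varphi_\bx(\T)(\bx_i)$ are precisely what is needed to cancel each jump along $e_\bx$, leaving a residual compatible with the Lagrange--bubble decomposition on $\hat{\T}$.
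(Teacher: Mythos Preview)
Your argument is correct and considerably more detailed than the paper's. The paper handles the theorem in a few lines: for a single bisected interior edge $e=K_1\cap K_2$ with new midpoint $\bx_e$, it observes that $\tau(\T)|_{K_j}\in P_k(K_j;\mathbb{S})$ is continuous along $e$ (being a single polynomial on each side) while the normal component is globally continuous at $\bx_e$, and then simply asserts that $\tau(\T)$ is therefore representable by the four extended vertex basis functions at $\bx_e$ together with the remaining basis of $\Sigma(\hat{\T})$; the passage to a general refinement is left implicit (iteration over bisections).

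Your route works directly for an arbitrary refinement $\hat{\T}$: you locate every vertex jump of a coarse function on the fine mesh, verify that each jump lies in the rank-one direction $t_e t_e^T$ matched by $E(\hat{\T})$ (the NVB tangent-inheritance you single out as the main obstacle is precisely the point the paper glosses over), subtract suitable $E(\hat{\T})$ functions, and then invoke the characterisation $\{\tau\in P_k(K;\mathbb{S}):\tau n_e|_e=0\text{ on every edge }e\text{ of }K\}=\Sigma_{k,b}(K)$ to land the residual in $\Sigma(\hat{\T})$. What this buys you is an explicit decomposition $\tau=\eta+\zeta$ with $\eta\in E(\hat{\T})$ and $\zeta\in\Sigma(\hat{\T})$, together with a concrete construction of the Lagrange--bubble splitting of $\zeta$; the paper's argument stays at the level of degrees of freedom and leaves the reader to unpack the representation. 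Both approaches rest on the same observation --- only the pure tangential component can jump at a new vertex, and the extended space is designed to absorb exactly that --- but yours makes every step visible.
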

\begin{proof}Given an  interior edge  $e\in\mathcal{E}(\Omega)$  with the attached two triangles $K_j\in\T$ for $j=1,2$,
the  bisection of  $e $ and $K_j$ leads to four new global degrees of freedom at the midpoint $\bx_e:={\rm mid}(e)\in \mathcal{V}_0(\hat{\T})\setminus\mathcal{V}(\T)$. Given $\tau(\T)\in \widetilde{\Sigma(\T)}$, the polynomial $\tau(\T)|_{K_j}\in P_k(K_j;\bS)$  is continuous along $e$ and the normal component is globally continuous at $\bx_e$. Consequently, $\tau(\T)$ can be represented with the basis functions in \eqref{eq:extbas} and the remaining basis functions of $\Sigma(\hat{\T})$     (e.g.~\eqref{eq:bas} for $k=3$). This concludes the proof.
\end{proof}
\subsection{Error estimator and adaptive algorithm}\label{sec:errada}
To establish the adaptive algorithm,  this paper employs  the a posteriori error estimator $\eta^2(\hat{\T}):=\sum_{K\in\hat{\T}}\eta^2(\hat{\T},K)$ from  \cite{ChenHuHuangMan2017} with
\begin{align}\label{eq:estimator}
\eta^2(\hat{\T},K): = h_K^4\|{\rm curl curl \,}(A\sigma(\hat{\T}))\|_{0,K}^2+\sum_{e\in\cE(K)}\Big(h_{K}\|\mathcal{J}_{e,1}\|_{0,e}^2+h_K^3\|\mathcal{J}_{e,2}\|_{0,e}^2\Big)
\end{align}
and
$$
\begin{array}{ll}
 \disp \mathcal{J}_{e,1}:=&\left\{\begin{array}{ll}
 \disp\Big[(A\sigma(\hat{\T}))\tangevec_e\cdot\tangevec_e\Big]_e&{\rm if~} e\in\hat{\mathcal{E}}(\Omega),\\
 \disp\Big((A\sigma(\hat{\T}))\tangevec_e\cdot\tangevec_e \Big)\big|_e\quad\quad\quad\quad\quad\ & {\rm if~} e\in\hat{\mathcal{E}}(\Gamma),
 \end{array}\right. \\
 \\
 \disp \mathcal{J}_{e,2}:=&\left\{\begin{array}{lr}
\disp \Big[{\rm curl}(A\sigma(\hat{\T}))\cdot \tangevec_e\Big]_e\quad&\quad\hspace{1mm} {\rm if~} e\in\hat{\mathcal{E}}(\Omega),\\
\disp \Big({\rm curl}(A\sigma(\hat{\T}))\cdot \tangevec_e -\partial _{\tangevec_e}\big((A\sigma(\hat{\T}))\tangevec_e\cdot \normalvec_e\big)\Big)\big|_e& \hspace{-1mm} {\rm if~} e\in\hat{\mathcal{E}}(\Gamma).\hspace{0.5mm}
 \end{array}\right.
 \end{array}
$$

Let $\eta^2(\hat{\T},\mathcal{M}):=\sum_{K\in\mathcal{M}}\eta^2(\hat{\T},K)$ for all $\mathcal{M}\subseteq\hat{\T}$. Let $Q_{\hat{\T}}$ denote  the  $L^2$ orthogonal projection   onto  $V(\hat{\T})$. The data oscillation reads
\begin{equation*}
 {\rm osc}^2(f,\mathcal{M}):=\sum_{K\in\mathcal{M}}h^2_K\|f-Q_{\hat{\T}}f\|^2_{0,K}.
\end{equation*}
 Recall the solution  $(\sigma,u)$   to \eqref{eqn1} with $\Gamma_D=\Gamma$ and $u_D\equiv0$ and the solution $(\sigma(\hat{\T}),u(\hat{\T}))\in\widetilde{\Sigma(\hat{\T})}\times V(\hat{\T})$   to \eqref{disMixElasextend} with respect to  $\hat{\T}$.  Define the weighted norm $\|\bullet\|_A:=(A\bullet,\bullet)^{1/2}$ in $L^2(\Omega;\bS)$.
 \begin{theorem}[reliability and efficiency]There exist positive constants $C_{Rel}$ and $C_{Eff}$ depending on the shape regularity of $\hat{\T}$ such that
\begin{equation}\label{reliability}
  \|\sigma-\sigma(\hat{\T})\|^2_A\leq C_{Rel}(\eta^2( \hat{\T})+{\rm osc}^2(f,\hat{\T}))\ \text{ (reliability)},
\end{equation}
\begin{equation}\label{efficiency}
 \eta^2( \hat{\T})\leq C_{Eff}\|\sigma-\sigma(\hat{\T})\|^2_A\ \text{ (efficiency)}.
\end{equation}
\end{theorem}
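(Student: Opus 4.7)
The plan is to follow the a posteriori analysis of \cite{ChenHuHuangMan2017}, adapted to the extended space $\widetilde{\Sigma(\hat{\T})}$. The key point is that the estimator $\eta(\hat{\T})$ depends only on $\sigma(\hat{\T})$ and the mesh $\hat{\T}$, and the discrete equilibrium $\ddiv\sigma(\hat{\T})=Q_{\hat{\T}}f$ continues to hold because $\ddiv\widetilde{\Sigma(\hat{\T})}=V(\hat{\T})$ (as shown in the previous theorem). So the abstract framework does not need to be redesigned; what has to be checked is that each ingredient of the classical argument goes through.

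For reliability, I would first introduce a Helmholtz-type decomposition of the stress error in the compliance norm: write
\begin{equation*}
A(\sigma-\sigma(\hat{\T})) = \nabla_s w + \vc\vc\phi,
\end{equation*}
with $w\in H^1_0(\Omega;\R^2)$ chosen so that $\ddiv(A(\sigma-\sigma(\hat{\T}))-\nabla_s w)=0$, and an Airy-type potential $\phi\in H^2(\Omega)\cap H^1_0(\Omega)$ (up to rigid modes) for the remaining divergence-free symmetric part. Testing the mixed system with $\nabla_s w$ and using $\ddiv\sigma(\hat{\T})=Q_{\hat{\T}}f$ together with $\|w\|_1\lesssim \|A(\sigma-\sigma(\hat{\T}))\|_0$ yields the data oscillation term. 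For the Curl Curl component, elementwise integration by parts (twice) of the identity $(A(\sigma-\sigma(\hat{\T})),\vc\vc\phi)=-(A\sigma(\hat{\T}),\vc\vc\phi)+(A\sigma,\vc\vc\phi)$ produces the volume residual $\vc\vc(A\sigma(\hat{\T}))$ on each element and the jumps $\mathcal{J}_{e,1}$ and $\mathcal{J}_{e,2}$ across edges; the continuous part $(A\sigma,\vc\vc\phi)$ vanishes since $A\sigma=\nabla_s u$. To absorb the Galerkin orthogonality, I would subtract a quasi-interpolant $\phi_h$ in an $H^2$-conforming space (e.g.\ HCT or a $C^1$ Clément-type operator). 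Here the extension of $\Sigma(\hat{\T})$ to $\widetilde{\Sigma(\hat{\T})}$ only makes the test space larger, so orthogonality holds at least as strongly as before. Standard interpolation estimates (with weights $h_K^2$ and $h_e^{3/2}$ from trace inequalities) then bound every contribution by $\eta(\hat{\T})\,\|A(\sigma-\sigma(\hat{\T}))\|_0$, which gives \eqref{reliability} after dividing.

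For efficiency, I would use the standard Verf\"urth bubble function machinery. The volume term $h_K^4\|\vc\vc(A\sigma(\hat{\T}))\|_{0,K}^2$ is controlled locally by testing $(A(\sigma-\sigma(\hat{\T})),\vc\vc(b_K^2\,\vc\vc(A\sigma(\hat{\T}))))$ with the quartic element bubble $b_K^2$, using the fact that $\vc\vc(A\sigma)=0$ and applying inverse estimates. The edge jump contributions $h_K\|\mathcal{J}_{e,1}\|_{0,e}^2$ and $h_K^3\|\mathcal{J}_{e,2}\|_{0,e}^2$ are handled by extending jumps into the two adjoining triangles via edge bubbles (in the $H^2$ sense, requiring $C^1$ liftings vanishing on the opposite vertices), testing against $A(\sigma-\sigma(\hat{\T}))$, and using the volume bound to remove the volume contamination that appears.

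The main obstacle will be verifying that the $H^2$-quasi-interpolation estimate for $\phi-\phi_h$ has the right weighted form and that the Helmholtz decomposition has the stability constant independent of $\lambda$ and $h$; once these ingredients are in place, the reliability and efficiency inequalities follow line by line as in \cite{ChenHuHuangMan2017}, since the extension from $\Sigma(\hat{\T})$ to $\widetilde{\Sigma(\hat{\T})}$ enters the argument only through Galerkin orthogonality against a larger space, which if anything tightens the analysis.
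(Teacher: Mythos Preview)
Your proposal is correct and follows the same route as the paper: both defer to \cite{ChenHuHuangMan2017}, observing that $\widetilde{\Sigma(\hat{\T})}\supset\Sigma(\hat{\T})$ differs only in vertex degrees of freedom, so every Galerkin-orthogonality and integration-by-parts step there carries over verbatim. One small slip to fix: the Helmholtz decomposition should be applied to $\sigma-\sigma(\hat{\T})$ (or to an intermediate divergence-free difference as in Lemma~\ref{lem:divproest}), not to $A(\sigma-\sigma(\hat{\T}))$, so that the pairing $(A(\sigma-\sigma(\hat{\T})),\vc\vc\phi)$ you subsequently analyse actually appears as a summand of $\|\sigma-\sigma(\hat{\T})\|_A^2$; the paper also remarks that \eqref{reliability} alternatively follows from the discrete reliability of Theorem~\ref{thm:disrelia}.
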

\begin{proof}The reliability and efficiency of the  error estimator for the mixed fnite element method \cite{Hu2015trianghigh,HuZhang2014a} in Subsect.~\ref{sec:orignalMFEM} have been investigated  in \cite{ChenHuHuangMan2017}.  Note that  the extended stress space $\widetilde{\Sigma(\hat{\T})}$ only differs from the stress space $\Sigma(\hat{\T})$ in \eqref{Def:Odisstress}  in the vertex degrees of freedom. Therefore,  an analogy argument in   \cite[Thm.~3.1]{ChenHuHuangMan2017} can be applied to prove the reliability \eqref{reliability}, and  in  \cite[Thm.~3.2]{ChenHuHuangMan2017} can  be applied to prove the efficiency \eqref{efficiency}. The details are omitted  in this paper. Nevertheless, the discrete reliability in Theorem~\ref{thm:disrelia} below will lead to the reliability.
\end{proof}

\setlength{\intextsep}{2pt}
\renewcommand{\thealgocf}{}
Let $\widetilde{\Sigma(\T_\ell)}$ denote the space in \eqref{eq:exspace}  and $V(\T_\ell)$ denote the space  in   \eqref{displacementspace}  with respect to   the triangulation $\T_\ell$ for $ \ell\in\mathbb{N}_0$ from the following adaptive algorithm.
\begin{algorithm}
\label{alg:amfem}
\caption{Adaptive algorithm for the nested mixed FEM}
Given a parameter $0<\theta<1$ and an initial triangulation
$\mathcal{T}_{0}$. Set $ \ell:=0$.
\begin{itemize}
    \item\textbf{SOLVE}: Solve \eqref{disMixElasextend} with respect to  $\mathcal{T}_\ell$ for
    the  solution $(\sigma(\T_{\ell}),u(\T_{\ell}))\in\widetilde{\Sigma(\T_\ell)}\times
V(\T_\ell)$.
    \item \textbf{ESTIMATE}: Compute the error indicator $\eta^2({\mathcal{T}_\ell})$  of \eqref{eq:estimator}.
    \item \textbf{MARK}: Mark a set $\mathcal{M}_\ell\subset\mathcal{T}_\ell$ with  (almost) minimal cardinality with
    \[
     \theta \big(\eta^2({\mathcal{T}_\ell})+ {\rm osc}^2(f,\cT_\ell)\big)\leq    \eta^2(\T_\ell,{\mathcal{M}_\ell})+ {\rm osc}^2(f,  \mathcal{M}_\ell ).
            \]
    \item \textbf{REFINE}: Refine each triangle $K$   in $\mathcal{M}_\ell $ by  NVB to get
    $\mathcal{T}_{\ell+1}$.
    \item Set $\ell:=\ell+1$ and go to Step \textbf{SOLVE}.
\end{itemize}
\end{algorithm}
\subsection{Optimal convergence} Since the residual-based  estimator is employed, the main task of optimality analysis is to prove   the quasi-orthogonality and the discrete reliability.

 Let $(\sigma(\T),u(\T))\in \widetilde{\Sigma(\T)}\times V(\T)$ (resp. $(\sigma(\hat{\T}),u(\hat{\T}))\in \widetilde{\Sigma(\hat{\T})}\times V(\hat{\T})$) solve \eqref{disMixElasextend} with respect to $\T\in\TT$ (resp. its refinement $\hat{\T}$).  Recall the $L^2$ projection $Q_\T $ (resp. $Q_{\hat{\T}}$) onto $V(\T)$ (resp. $V(\hat{\T})$). The analysis of the quasi-orthogonality and the discrete reliability requires the intermediate solution $(\hat{\sigma}(\hat{\T}),\inteu(\hat{\T}))\in\widetilde{\Sigma(\hat{\T})}\times V(\hat{\T})$ with
 \an{\lab{disMixElasextendauxi}\left\{ \ad{
  (A\hat{\sigma}(\hat{\T}),\tau(\hat{\T}))+({\rm div}\tau(\hat{\T}),\inteu(\hat{\T}))&=0&& \hbox{for all \ } \tau(\hat{\T})\in\widetilde{\Sigma(\hat{\T})} ,\\
   ({\rm div}\hat{\sigma}(\hat{\T}),v(\hat{\T}))&= (Q_\T f,v(\hat{\T})) &\qquad& \hbox{for all \ } v(\hat{\T})\in V(\hat{\T}). }
   \right.
}

\begin{lemma}\label{lem:intepost}
Let $(\sigma(\hat{\T}),u(\hat{\T}))\in (\widetilde{\Sigma(\hat{\T})},V(\hat{\T}))$ solve   \eqref{disMixElasextend}, and let $(\hat{\sigma}(\hat{\T}),\inteu(\hat{\T}))\in(\widetilde{\Sigma(\hat{\T})},V(\hat{\T}))$ solve  \eqref{disMixElasextendauxi}. Then it holds
\begin{equation*}
  \|\sigma(\hat{\T})-\hat{\sigma}(\hat{\T})\|_A\lesssim {\rm osc}(f, {\T\backslash\hat{\T}}).
\end{equation*}
\end{lemma}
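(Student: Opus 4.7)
The plan is a Brezzi-type argument that localizes the error estimate to the refined elements, exploiting the nestedness $V(\T)\subset V(\hat{\T})$ of the displacement spaces. Set $e_\sigma:=\sigma(\hat{\T})-\hat{\sigma}(\hat{\T})$ and $e_u:=u(\hat{\T})-\inteu(\hat{\T})$. Subtracting \eqref{disMixElasextendauxi} from \eqref{disMixElasextend} yields the error system
\begin{align*}
(A e_\sigma,\tau)+(\div\tau,e_u)&=0 && \text{for all } \tau\in\widetilde{\Sigma(\hat{\T})},\\
(\div e_\sigma,v)&=(f-Q_\T f,v) && \text{for all } v\in V(\hat{\T}).
\end{align*}
Testing the first equation with $\tau=e_\sigma$ and invoking the second with the admissible choice $v=e_u\in V(\hat{\T})$ gives $\|e_\sigma\|_A^2=-(f-Q_\T f,e_u)$. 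Because $V(\T)\subset V(\hat{\T})$ and $Q_\T e_u\in V(\T)$, the $L^2$-orthogonality $(f-Q_\T f,Q_\T e_u)=0$ lets me replace $e_u$ by $(I-Q_\T)e_u$. The key algebraic observation is that on any unrefined element $K\in\T\cap\hat{\T}$ one has $e_u|_K\in P_{k-1}(K)=V(\T)|_K$, so $(I-Q_\T)e_u$ vanishes there and the integration collapses onto $\bigcup(\T\setminus\hat{\T})$. Cauchy--Schwarz then produces
\begin{equation*}
\|e_\sigma\|_A^2 \le \|f-Q_\T f\|_{0,\cup(\T\setminus\hat{\T})}\,\|(I-Q_\T)e_u\|_0.
\end{equation*}

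The remaining task is to bound $\|(I-Q_\T)e_u\|_0\lesssim\|e_\sigma\|_A$ via the discrete inf-sup condition. Since $\div\Sigma(\hat{\T})=V(\hat{\T})$ (established for the Hu--Zhang element in \cite{Hu2015trianghigh,HuZhang2014a}) and $\Sigma(\hat{\T})\subset\widetilde{\Sigma(\hat{\T})}$, the enlarged space inherits the Fortin-type stability: there exists $\tau^\ast\in\widetilde{\Sigma(\hat{\T})}$ with $\div\tau^\ast=(I-Q_\T)e_u$ and $\|\tau^\ast\|_{H(\div)}\lesssim\|(I-Q_\T)e_u\|_0$. Inserting $\tau^\ast$ into the first error equation and using $((I-Q_\T)e_u,Q_\T e_u)=0$ yields
\begin{equation*}
\|(I-Q_\T)e_u\|_0^2=(\div\tau^\ast,e_u)=-(A e_\sigma,\tau^\ast)\lesssim\|e_\sigma\|_A\,\|(I-Q_\T)e_u\|_0.
\end{equation*}
Dividing and combining with the previous display produces $\|e_\sigma\|_A\lesssim\|f-Q_\T f\|_{0,\cup(\T\setminus\hat{\T})}$, which is controlled by the data oscillation $\mathrm{osc}(f,\T\setminus\hat{\T})$ on the refined coarse elements.

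The principal obstacle is the inf-sup step: one must confirm that enlarging the stress space from $\Sigma(\hat{\T})$ to $\widetilde{\Sigma(\hat{\T})}$ does not spoil the surjectivity of $\div$ onto $V(\hat{\T})$ or the accompanying norm bound. Here this is inherited for free because the extension only appends extra tangential-component basis functions at the newly added vertices while retaining all basis functions of $\Sigma(\hat{\T})$, so any Fortin operator or right inverse of $\div$ built for the original Hu--Zhang element still serves. The localization carried out in the first paragraph is purely algebraic and relies on nothing beyond the nestedness of the displacement spaces.
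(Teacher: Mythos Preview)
Your argument is clean up to the penultimate display, but the final sentence hides a genuine gap: the quantity you obtain, $\|f-Q_\T f\|_{0,\cup(\T\setminus\hat{\T})}$, is \emph{not} controlled by $\mathrm{osc}(f,\T\setminus\hat{\T})$. Recall the paper's definition
\[
\mathrm{osc}^2(f,\T\setminus\hat{\T})=\sum_{K\in\T\setminus\hat{\T}}h_K^2\,\|f-Q_{\hat{\T}}f\|_{0,K}^2,
\]
which carries an $h_K$ weight. Since $\|f-Q_{\hat{\T}}f\|_{0,K}\le\|f-Q_\T f\|_{0,K}$ on each refined $K$, the inequality goes the wrong way: $\mathrm{osc}(f,\T\setminus\hat{\T})\lesssim\|f-Q_\T f\|_{0,\cup(\T\setminus\hat{\T})}$, not the reverse. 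Your inf--sup step only delivers $\|(I-Q_\T)e_u\|_0\lesssim\|e_\sigma\|_A$ without any $h_K$ gain, so after Cauchy--Schwarz you are one power of $h$ short of the claimed oscillation bound.

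The paper recovers this missing factor by passing through the \emph{continuous} elasticity problem: it sets $(\xi,z)=\mathcal{L}^{-1}\big((I-Q_\T)Q_{\hat{\T}}f\big)$ and invokes a stability estimate of the type $\|\xi\|_A\lesssim\|(I-Q_\T)Q_{\hat{\T}}f\|_{-1}$. Because $(I-Q_\T)Q_{\hat{\T}}f$ is $L^2$-orthogonal to piecewise polynomials on $\T$, a Poincar\'e/duality argument converts the $H^{-1}$ norm into the $h_K$-weighted $L^2$ sum, producing exactly $\mathrm{osc}(f,\T\setminus\hat{\T})$. Then $(e_\sigma,e_u)$ is the discrete approximation of $(\xi,z)$ on $\hat{\T}$, and the quasi-best-approximation $\|\xi-e_\sigma\|_A\lesssim\|\xi\|_A$ together with a triangle inequality closes the proof. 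To repair your direct approach you would need a \emph{weighted} inf--sup or Fortin construction yielding $\tau^\ast$ with $\div\tau^\ast=(I-Q_\T)e_u$ and the sharper local bound $\|\tau^\ast\|_{0,K}\lesssim h_K\|(I-Q_\T)e_u\|_{0,K}$; the standard global surjectivity you cite does not provide this.
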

\begin{proof}Given any $f\in L^2(\Omega;\R^2)$,
write the mixed formulation for linear elasticity \eqref{eqn1} with $\Gamma_D=\Gamma$ and $u_D\equiv0$ as $\mathcal{L}(\sigma,u)=f$.  Let $(\xi,z)=\mathcal{L}^{-1}\big((1-Q_\T )Q_{\hat{\T}} f\big)$. A stability result as  in  \cite[Lemma~3.1]{ChenHuHuangMan2017}  shows
\begin{align}
\label{eq:conintest}
\|\xi\|_A\lesssim {\rm osc}(Q_{\hat{\T}}f,\T)={\rm osc}(Q_{\hat{\T}}f,\T\setminus\hat{\T}).
\end{align}
Note that $(\sigma(\hat{\T})-\hat{\sigma}(\hat{\T}),u(\hat{\T})-\inteu(\hat{\T}))$ is an approximation to $(\xi,z)$ with respect to $\hat{\T}$.  The best $L^2$ approximation \cite{Boffi-Brezzi-Fortin} leads to
\begin{align*}
\|\xi-(\sigma(\hat{\T})-\hat{\sigma}(\hat{\T}))\|_A\lesssim\inf_{\tau(\hat{\T})\in\widetilde{\Sigma(\hat{\T})}}\|\xi-\tau(\hat{\T})\|_A\leq\|\xi\|_A.
\end{align*}
This, \eqref{eq:conintest}  and a triangle inequality conclude the proof.
\end{proof}
\begin{theorem}[quasi-orthogonality]\label{thm:quasiotho}
For any $0<\delta<1$, there exists $C_0>0$ such that
\begin{align*}
(1-\delta)\|\sigma -\sigma(\hat{\T})\|_A^2\leq \|\sigma -\sigma(\T)\|_A^2-\|\sigma(\hat{\T}) -\sigma(\T)\|_A^2+\frac{C_0}{\delta}{\rm osc}^2(f,\T\setminus\hat{\T}).
\end{align*}
\end{theorem}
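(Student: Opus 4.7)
The plan is to exploit the nestedness $\widetilde{\Sigma(\T)}\subset\widetilde{\Sigma(\hat{\T})}$ established in the nestedness theorem just above, which lets me treat $\sigma(\T)$ as an element of the fine space. Because $V(\T)\subset V(\hat{\T})$ and the mixed formulation forces $\ddiv\sigma(\T)=Q_\T f=\ddiv\hat{\sigma}(\hat{\T})$, the difference $\hat{\sigma}(\hat{\T})-\sigma(\T)$ sits in the divergence-free subspace $Z(\hat{\T}):=\{\tau\in\widetilde{\Sigma(\hat{\T})}:\ddiv\tau=0\}$. This is the one place where the novel construction \eqref{eq:exspace} is essential; without it, $\sigma(\T)$ need not even belong to $\widetilde{\Sigma(\hat{\T})}$.

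Next I would harvest two Pythagorean identities from Galerkin orthogonality on $Z(\hat{\T})$. Subtracting the intermediate problem \eqref{disMixElasextendauxi} from the continuous problem \eqref{eqn1} and testing with $\tau\in Z(\hat{\T})$ yields $(A(\sigma-\hat{\sigma}(\hat{\T})),\tau)=0$; subtracting \eqref{disMixElasextendauxi} from \eqref{disMixElasextend} and testing with the same $\tau$ yields $(A(\sigma(\hat{\T})-\hat{\sigma}(\hat{\T})),\tau)=0$. Taking $\tau=\hat{\sigma}(\hat{\T})-\sigma(\T)\in Z(\hat{\T})$ in both, the cross terms vanish and produce
\begin{align*}
\|\sigma-\sigma(\T)\|_A^2 &= \|\sigma-\hat{\sigma}(\hat{\T})\|_A^2+\|\hat{\sigma}(\hat{\T})-\sigma(\T)\|_A^2,\\
\|\sigma(\hat{\T})-\sigma(\T)\|_A^2 &= \|\sigma(\hat{\T})-\hat{\sigma}(\hat{\T})\|_A^2+\|\hat{\sigma}(\hat{\T})-\sigma(\T)\|_A^2.
\end{align*}
Their difference eliminates the common kernel contribution and leaves
$$\|\sigma-\sigma(\T)\|_A^2-\|\sigma(\hat{\T})-\sigma(\T)\|_A^2=\|\sigma-\hat{\sigma}(\hat{\T})\|_A^2-\|\sigma(\hat{\T})-\hat{\sigma}(\hat{\T})\|_A^2.$$

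The last step routes this identity to the claimed form. Applying the triangle inequality to $\sigma-\sigma(\hat{\T})=(\sigma-\hat{\sigma}(\hat{\T}))+(\hat{\sigma}(\hat{\T})-\sigma(\hat{\T}))$, squaring, and using Young's inequality with parameter $\delta/(1-\delta)$, gives
$$(1-\delta)\|\sigma-\sigma(\hat{\T})\|_A^2\leq\|\sigma-\hat{\sigma}(\hat{\T})\|_A^2+\delta^{-1}\|\hat{\sigma}(\hat{\T})-\sigma(\hat{\T})\|_A^2.$$
Substituting the displayed identity for $\|\sigma-\hat{\sigma}(\hat{\T})\|_A^2$ yields
$$(1-\delta)\|\sigma-\sigma(\hat{\T})\|_A^2\leq\|\sigma-\sigma(\T)\|_A^2-\|\sigma(\hat{\T})-\sigma(\T)\|_A^2+(1+\delta^{-1})\|\hat{\sigma}(\hat{\T})-\sigma(\hat{\T})\|_A^2,$$
and Lemma~\ref{lem:intepost} bounds $\|\hat{\sigma}(\hat{\T})-\sigma(\hat{\T})\|_A^2$ by ${\rm osc}^2(f,\T\setminus\hat{\T})$ up to an absolute constant, producing the asserted estimate with $C_0$ independent of $\delta$.

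The expected main obstacle is conceptual rather than technical: it is the identification $\hat{\sigma}(\hat{\T})-\sigma(\T)\in Z(\hat{\T})$. For the original non-nested element of \cite{Hu2015trianghigh,HuZhang2014a}, the $C^0$ vertex continuity excludes $\sigma(\T)$ from $\Sigma(\hat{\T})$, so the two Pythagorean identities --- and with them the entire quasi-orthogonality --- break down. Once the partially relaxed space $\widetilde{\Sigma(\hat{\T})}$ restores nestedness, everything else is elementary algebra plus a single invocation of the oscillation estimate of Lemma~\ref{lem:intepost}.
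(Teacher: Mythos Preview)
Your proof is correct and uses the same essential ingredients as the paper: nestedness to ensure $\sigma(\T)\in\widetilde{\Sigma(\hat{\T})}$, the observation that $\hat{\sigma}(\hat{\T})-\sigma(\T)$ is divergence-free, Galerkin orthogonality on the kernel, Lemma~\ref{lem:intepost}, and Young's inequality. The only difference is organizational: the paper expands $\|\sigma-\sigma(\T)\|_A^2$ directly around $\sigma(\hat{\T})$ and bounds the single cross term $(A(\sigma-\sigma(\hat{\T})),\sigma(\hat{\T})-\sigma(\T))=(A(\sigma-\sigma(\hat{\T})),\sigma(\hat{\T})-\hat{\sigma}(\hat{\T}))$ via Cauchy--Schwarz and Young, whereas you expand around $\hat{\sigma}(\hat{\T})$, obtain two exact Pythagorean identities, subtract them, and then apply Young to $\|\sigma-\sigma(\hat{\T})\|_A$; the two routes are algebraically equivalent.
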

\begin{proof}Recall $\hat{\sigma}(\hat{\T})$ from \eqref{disMixElasextendauxi}. Since $\ddiv(\hat{\sigma}(\hat{\T})-\sigma(\T))=0$,  the choice $\tau(\hat{\T})=\hat{\sigma}(\hat{\T})-\sigma(\T)$ in \eqref{disMixElasextend} leads to
\begin{align*}
\begin{aligned}
(A(\sigma-\sigma(\hat{\T})),\sigma(\hat{\T})-\sigma(\T))&
=(A(\sigma-\sigma(\hat{\T})),\sigma(\hat{\T})-\hat{\sigma}(\hat{\T})+\hat{\sigma}(\hat{\T})-\sigma(\T))\\
&=(A(\sigma-\sigma(\hat{\T})),\sigma(\hat{\T})-\hat{\sigma}(\hat{\T})).
\end{aligned}
\end{align*}
This and  Lemma~\ref{lem:intepost} show  that there exists some constant $C_0>0$ such that
\begin{equation*}
(A(\sigma-\sigma(\hat{\T})),\sigma(\hat{\T})-\sigma(\T))\leq \sqrt{C_0} \|\sigma-\sigma(\hat{\T})\|_A{\rm osc}(f,\T\setminus\hat{\T}).
\end{equation*}
The combination with Young's inequality concludes the proof.
\end{proof}

The analysis of the discrete reliability requires some $H^2(\Omega)$ conforming element. Recall the subdomains $\omega^+_{\bx_e}$  and $\omega^-_{\bx_e}$ in \eqref{KPos}.
The following $H^2$ conforming finite element space  $ S^{k+2}(\hat{\T})$ for $k\geq 3$ from  \cite{CarstensenHu2018} approximating $H^2(\Omega)$ extends the higher order Argyris finite element space   by a modification around the vertices. The   space reads
\begin{equation}\label{eq:Htwospace}
  \begin{split}
 S^{k+2}(\hat{\T}):=\big\{&\phi\in L^2(\Omega)\ \big|\forall K\in\hat{\T}  \ \phi|_K\in P_{k+2}(K;\mathbb{R});\ \phi \text{ and $\nabla \phi$ are continuous at} \\
  &\text{all vertices, $\nabla \phi$ are continuous across all interior edges, $\nabla^2 \phi$  are }\\
  &\text{continuous at each  inital vertex   $\bx\in\mathcal{V}(\T_0)$ and each boundary}\\ &\text{vertex $\bx\in\mathcal{V}(\hat{\T})\setminus\mathcal{V}_0(\hat{\T})$, $\normalvec_{e}^T\nabla^2\phi t_{e}$ and $\tangevec_{e}^T\nabla^2\phi\tangevec_{e}$} \text{ are continuous at}\\ &\text{each  internal vertex $\bx_e\in\mathcal{V}_0(\hat{\T})\setminus\mathcal{V}(\T_0)$,  $n_{ e}^T\nabla^2\phi n_{ e}$ is continuous}\\
  & \text{at $
\bx_e$ in }\omega^+_{\boldsymbol{x}} \text{ and at $\bx_e$}\text{  in $\omega^-_{\bx_e}$ for each   }\bx_e\hspace{-0.7mm}\in\hspace{-0.7mm}\mathcal{V}_0(\hat{\T})\hspace{-0.7mm}\setminus\hspace{-0.7mm}\mathcal{V}(\T_0)\big\}.\hspace{-2mm}
  \end{split}
\end{equation}
Unlike functions in the higher order  Argyris finite element space, the component $n_{ e}^T\nabla^2\phi(\hat{\T}) n_{ e}$ of the second derivatives of function $\phi(\hat{\T})\in S^{k+2}(\hat{\T})$ is not necessarily continuous at all internal vertices  $\bx_e\in\mathcal{V}_0(\hat{\T})\setminus\mathcal{V}(\T_0)$.
\begin{lemma}[discrete Helmholtz decomposition]\label{lem:disHelm}
Given any $\tau(\hat{\T})\in\widetilde{\Sigma(\hat{\T})}$ with $\div\tau(\hat{\T})=0$, there exists $\phi(\hat{\T})\in S^{k+2}(\hat{\T})$ such that $\tau(\hat{\T})={\rm \boldsymbol{CurlCurl}}\phi(\hat{\T})$.
\end{lemma}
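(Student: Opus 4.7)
The plan is to construct $\phi(\hat{\T})$ from the classical Airy stress function for the divergence-free field $\tau(\hat{\T})$ and to check that its regularity matches each clause in the definition \eqref{eq:Htwospace} of $S^{k+2}(\hat{\T})$. Three ingredients will do the job: a global Helmholtz step, an elementwise polynomial-degree step, and a vertex-by-vertex continuity step powered by three pointwise identities.

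First, since $\Omega$ is simply connected and $\tau(\hat{\T})\in H(\ddiv,\Omega;\mathbb{S})$ is symmetric with $\ddiv\tau(\hat{\T})=0$, the classical Airy stress-function result yields $\phi\in H^2(\Omega)$, unique up to a linear polynomial, with $\tau(\hat{\T})={\rm \boldsymbol{CurlCurl}}\phi$. Elementwise, the basis functions in \eqref{eq:bas} and \eqref{eq:extbas} all restrict to elements of $P_k(K;\mathbb{S})$, hence $\tau(\hat{\T})|_K\in P_k(K;\mathbb{S})$; since ${\rm \boldsymbol{CurlCurl}}:P_{k+2}(K;\mathbb{R})/P_1(K;\mathbb{R})\to\{\eta\in P_k(K;\mathbb{S}):\ddiv\eta=0\}$ is an isomorphism on a simply connected triangle (both spaces have dimension $(k+1)(k+6)/2$), this forces $\phi|_K\in P_{k+2}(K;\mathbb{R})$. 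Combined with $\phi\in H^2(\Omega)$, this already gives the continuity of $\phi$ and of $\nabla\phi$ across every interior edge and at every vertex of $\hat{\T}$.

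The heart of the argument is to transfer the vertex continuity of $\tau(\hat{\T})$ to that of $\nabla^2\phi$. For any orthonormal frame $(t,n)$ with $n=t^\perp$, a direct computation from the component formula for ${\rm \boldsymbol{CurlCurl}}\phi$ yields the pointwise identities
\begin{equation*}
 n^T\tau(\hat{\T})n=t^T\nabla^2\phi\,t,\qquad t^T\tau(\hat{\T})t=n^T\nabla^2\phi\,n,\qquad t^T\tau(\hat{\T})n=-n^T\nabla^2\phi\,t.
\end{equation*}
At a vertex $\bx\in\mathcal{V}(\T_0)$ or $\bx\in\mathcal{V}(\hat{\T})\setminus\mathcal{V}_0(\hat{\T})$, functions in $\widetilde{\Sigma(\hat{\T})}$ inherit the full $C^0$ continuity of $\Sigma(\hat{\T})$, so all three components of $\nabla^2\phi$ are single-valued at $\bx$. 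At a split node $\bx_e\in\mathcal{V}_0(\hat{\T})\setminus\mathcal{V}(\T_0)$ in the adapted frame $(t_e,n_e)$, the three unsplit basis functions in \eqref{eq:extbas} give continuity of $n_e^T\tau(\hat{\T})n_e$ and $t_e^T\tau(\hat{\T})n_e$ at $\bx_e$, which translates via the identities above to continuity of $t_e^T\nabla^2\phi\,t_e$ and $n_e^T\nabla^2\phi\,t_e$ at $\bx_e$; the split pair $\tau_{\bx_e}^\pm$ permits $t_e^T\tau(\hat{\T})t_e=n_e^T\nabla^2\phi\,n_e$ to take different values on $\omega^+_{\bx_e}$ and $\omega^-_{\bx_e}$ while remaining continuous inside each patch. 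This matches \eqref{eq:Htwospace} clause by clause, giving $\phi\in S^{k+2}(\hat{\T})$.

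The main obstacle is the bookkeeping in the last step: one has to verify, in the correct edge-adapted frame at every kind of vertex, that each relaxed-continuity clause imposed on $\widetilde{\Sigma(\hat{\T})}$ corresponds, through the three identities above, to the matching clause in \eqref{eq:Htwospace}. The first two steps—Helmholtz on a simply connected domain and the local polynomial-degree count—are standard; what makes the third step succeed is precisely that $S^{k+2}(\hat{\T})$ has been designed so that its relaxed $\nabla^2$-continuity at split nodes is the image under ${\rm \boldsymbol{CurlCurl}}$ of the relaxed vertex continuity built into $\widetilde{\Sigma(\hat{\T})}$.
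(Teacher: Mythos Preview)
Your proof is correct but follows a genuinely different route from the paper's. The paper argues by dimension counting: it asserts the inclusion ${\rm \boldsymbol{CurlCurl}}\,S^{k+2}(\hat{\T})\subseteq K_{\hat{\T}}(\ddiv):=\{\tau\in\widetilde{\Sigma(\hat{\T})}:\ddiv\tau=0\}$, computes $\dim\widetilde{\Sigma(\hat{\T})}-\dim V(\hat{\T})$ and $\dim S^{k+2}(\hat{\T})-3$ in terms of $|\mathcal{V}(\hat{\T})|$, $|\hat{\cE}|$, $|\hat{\T}|$, and $|\mathcal{V}_0(\hat{\T})\setminus\mathcal{V}(\T_0)|$, and then invokes Euler's formula $|\mathcal{V}(\hat{\T})|-|\hat{\cE}|+|\hat{\T}|=1$ to show the two dimensions coincide. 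Your approach instead constructs the global Airy potential $\phi\in H^2(\Omega)$, upgrades it to a piecewise polynomial via the local isomorphism ${\rm \boldsymbol{CurlCurl}}:P_{k+2}/P_1\to\ker\ddiv\cap P_k(K;\mathbb{S})$, and then verifies membership in $S^{k+2}(\hat{\T})$ clause by clause through the pointwise identities linking $\tau$ and $\nabla^2\phi$.

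Each approach buys something. The paper's count is short once the dimension formulas are in hand, but it leaves the inclusion ${\rm \boldsymbol{CurlCurl}}\,S^{k+2}(\hat{\T})\subseteq\widetilde{\Sigma(\hat{\T})}$ unargued---your three identities are exactly what is needed to justify that direction. Conversely, your argument makes transparent \emph{why} the relaxed second-derivative continuity in \eqref{eq:Htwospace} was designed as it was (it is the preimage under ${\rm \boldsymbol{CurlCurl}}$ of the relaxed vertex continuity in $\widetilde{\Sigma(\hat{\T})}$), at the cost of invoking the continuous Airy representation and a local dimension count on each triangle. Both proofs use simple-connectedness in an essential way---yours for the global Helmholtz step, the paper's for Euler's formula.
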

\begin{proof}The proof is given by counting the dimensions. Let  
$$
K_{\hat{\T}}(\ddiv):=\{\tau(\hat{\T})\in\widetilde{\Sigma(\hat{\T})}\ |\ \ddiv\tau(\hat{\T})=0\}
$$
 denote the discrete divergence kernel space  with respect to $\widetilde{\Sigma(\hat{\T})}$. Note that ${\rm \boldsymbol{CurlCurl}}S^{k+2}(\hat{\T})$ is a subspace of $K_{\hat{\T}}(\ddiv)$, namely, ${\rm \boldsymbol{CurlCurl}}S^{k+2}(\hat{\T})\subseteq K_{\hat{\T}}(\ddiv)$. The dimension of   $K_{\hat{\T}}(\ddiv)$ is equal to
 the difference between the dimension of $\widetilde{\Sigma(\hat{\T})}$ and the dimension of $V(\hat{\T})$, this is
 \begin{equation*}
 \begin{split}
&3|\mathcal{V}(\hat{\T})|+|\mathcal{V}_0(\hat{\T})\setminus\mathcal{V}(\T_0)|+2(k-1)| \hat{\cE} |
 +\frac{3k(k-1)}{2}|\hat{\T}|-k(k+1)|\hat{\T}|\\
 =&3|\mathcal{V}(\hat{\T})|+|\mathcal{V}_0(\hat{\T})\setminus\mathcal{V}(\T_0)|+2(k-1)| \hat{\cE} |+\frac{k(k-5)}{2}|\hat{\T}|.
\end{split}
\end{equation*}
  The dimension of  
  ${\rm \boldsymbol{CurlCurl}}S^{k+2}(\hat{\T})$ is 
  $$
  6|\mathcal{V}(\hat{\T})|+|\mathcal{V}_0(\hat{\T})\setminus\mathcal{V}(\T_0)|+(2k-5)| \hat{\cE} |+\frac{(k-2)(k-3)}{2}| \hat{\T} |-3.$$
   Since $\Omega$ is simply-connected,  Euler's formula holds that 
   $$
   |\mathcal{V}(\hat{\T})|-| \hat{\cE} |+|\hat{\T}|=1.
   $$
     This shows that 
     $${\rm dim}{\rm \boldsymbol{CurlCurl}}S^{k+2}(\hat{\T})={\rm dim}K_{\hat{\T}}(\ddiv)$$
      and consequently implies that
      $${\rm \boldsymbol{CurlCurl}}S^{k+2}(\hat{\T})=K_{\hat{\T}}(\ddiv).$$ 
      This concludes the proof.
 \end{proof}
 \begin{lemma}[quasi-interpolation]\label{lem:inter}
 Suppose $S^{k+2}(\T)$ (resp. $S^{k+2}(\hat{\T})$) denotes   the space  in \eqref{eq:Htwospace}    with respect to $\T$ (resp. its refinement  $\hat{\T}$). There exists a quasi-interpolation $\Pi_{\T,\nabla^2}: S^{k+2}(\hat{\T})\rightarrow S^{k+2}(\T)$, which preserves the value of the function at all vertices of $\T$, for any $\phi(\hat{\T})\in S^{k+2}(\hat{\T})$,
 \begin{itemize}
 \item[(a)] for any $\bx\in\mathcal{V}(\T)$, $\Pi_{\T,\nabla^2}\phi(\hat{\T})(\bx)=\phi(\hat{\T})(\bx)$;
 \item[(b)] for any $K\in\T\setminus\hat{\T}$, $\Pi_{\T,\nabla^2}\phi(\hat{\T})|_K=\phi(\hat{\T})|_K$;
 \item[(c)] for any  $K\in\T$ with patch $\Omega(K):={\rm int}\big(\cup\{K^\prime\in\T\ |\ {\rm dist}(K,K^\prime)=0\}\big)$,
 \begin{align}\label{eq:localest}
 \sum^2_{m=0}h_K^{m-2}|\phi(\hat{\T})-\Pi_{\T,\nabla^2}\phi(\hat{\T})|_{m,K}\leq C_{\rm apx}|\phi(\hat{\T})|_{2,\Omega(K)}
 \end{align}
 with a constant $C_{\rm apx}$ that only depends on the shape regularity of $\T_0$.
 \end{itemize}
 \end{lemma}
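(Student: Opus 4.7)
The plan is to define $\Pi_{\T,\nabla^2}$ as a nodal interpolation operator through the (modified Argyris) degrees of freedom of $S^{k+2}(\T)$. Enumerate these DOFs on $\T$: for each vertex $\bx\in\mathcal{V}(\T)$, the value, the gradient, and the components of the Hessian --- with the modification that at each internal vertex $\bx_e\in\mathcal{V}_0(\T)\setminus\mathcal{V}(\T_0)$ the DOF $\normalvec_e^T\nabla^2\phi\,\normalvec_e$ is evaluated \emph{separately} in $\omega_{\bx_e}^+$ and $\omega_{\bx_e}^-$ (using the coarse edge $e$ from the bisection history); plus edge moments of $\partial_{\normalvec_e}\phi$ up to degree $k-3$ on each edge and interior moments on each triangle. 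Given $\phi(\hat{\T})\in S^{k+2}(\hat{\T})$, set $\Pi_{\T,\nabla^2}\phi(\hat{\T})\in S^{k+2}(\T)$ to be the unique element whose DOFs are obtained by direct pointwise evaluation of $\phi(\hat{\T})$ at vertices of $\T$ (using sided limits at the split Hessian DOFs) and by local $L^2$-projections for the edge and interior moments.

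Properties (a) and (b) are then immediate from the construction. For (a), the vertex value DOF is just the evaluation $\phi(\hat{\T})(\bx)$, which is well defined because $\phi(\hat{\T})\in C^0(\Omega)$. For (b), if $K\in\T\cap\hat{\T}$ (unrefined under the step $\T\to\hat{\T}$), then $\phi(\hat{\T})|_K\in P_{k+2}(K;\mathbb{R})$, and since its full set of DOFs on $K$ coincides with what $\Pi_{\T,\nabla^2}$ extracts from it, unisolvence of the element gives $\Pi_{\T,\nabla^2}\phi(\hat{\T})|_K=\phi(\hat{\T})|_K$. For (c), the standard scaling argument on a reference patch, combined with the fact that the linear functionals defining the DOFs are continuous on $H^2(\Omega(K))$ (trace of $\nabla^2\phi$ at vertices via the embedding $H^2\hookrightarrow C^0$ in 2D, edge traces of $\nabla\phi$, and $L^2$ averages on $K$), yields boundedness $\|\Pi_{\T,\nabla^2}\phi\|_{2,K}\lesssim\|\phi\|_{2,\Omega(K)}$. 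Since $\Pi_{\T,\nabla^2}$ preserves $P_1(\Omega(K);\mathbb{R})$ exactly (the linear polynomials kill all Hessian and moment DOFs and are trivially reproduced by the value/gradient DOFs), a Bramble--Hilbert argument on the patch $\Omega(K)$ delivers the estimate \eqref{eq:localest} with a constant depending only on the shape regularity of $\T_0$, which is uniform thanks to the NVB refinement rule.

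The main obstacle is the bookkeeping at internal vertices of $\T$ where the Hessian DOFs split. One has to verify that (i) the coarse edge $e$ used to define $\omega^{\pm}_{\bx_e}$ for the $\T$-DOFs is identical to the coarse edge recorded in the bisection history seen by $\hat{\T}$, so that $\phi(\hat{\T})$ in $S^{k+2}(\hat{\T})$ already possesses compatible sided limits of $\normalvec_e^T\nabla^2\phi\,\normalvec_e$ at $\bx_e$, and (ii) the local neighborhood $\omega_{\bx_e}^\pm$ of $\bx_e$ in $\T$ is refined only within itself under $\T\to\hat{\T}$ (a consequence of NVB), so no ambiguity arises in the sided evaluation. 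Once this compatibility is established, the proof of (c) reduces to a routine local estimate element-by-element, with the patch $\Omega(K)$ entering only through the Hessian DOFs that require $H^2$-traces at vertices shared with $K$.
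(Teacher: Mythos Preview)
Your construction has a genuine gap in the proof of (c). The claim that ``trace of $\nabla^2\phi$ at vertices via the embedding $H^2\hookrightarrow C^0$ in 2D'' makes the Hessian degrees of freedom continuous on $H^2(\Omega(K))$ is simply false: that embedding controls point values of $\phi$, not of $\nabla^2\phi$. The functionals $\phi\mapsto\nabla^2\phi(\bx)$ are \emph{not} bounded on $H^2$ in two dimensions, so the stability step $\|\Pi_{\T,\nabla^2}\phi\|_{2,K}\lesssim\|\phi\|_{2,\Omega(K)}$ cannot be obtained from the reference-patch argument you describe, and the Bramble--Hilbert step never gets off the ground.

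This is not a matter of bookkeeping; the nodal interpolant you define actually violates \eqref{eq:localest} with a uniform constant. Take a vertex $\bx\in\mathcal{V}(\T_0)$ shared by $K\in\T$ and surrounded in $\hat{\T}$ by elements of diameter $\epsilon\ll h_K$. Let $\phi\in S^{k+2}(\hat{\T})$ be the basis function dual to a Hessian DOF at $\bx$ on $\hat{\T}$ (so $\nabla^2\phi(\bx)$ has a fixed unit component, all other DOFs vanish). A scaling computation gives $|\phi|_{2,\Omega(K)}\sim\epsilon$, while your $\Pi_{\T,\nabla^2}\phi$ picks up the unit Hessian value and reproduces, up to small corrections, the $\T$-basis function on $K$ for that DOF, whose $H^2$-seminorm scales like $h_K$. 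Hence $|\phi-\Pi_{\T,\nabla^2}\phi|_{2,K}\sim h_K$, and the ratio $h_K/\epsilon$ is unbounded as $\hat{\T}$ refines near~$\bx$. (The same test with value or gradient DOFs does not blow up, which is consistent with $H^2\hookrightarrow C^0$; the Hessian DOFs are precisely the unstable ones.)

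The paper avoids this by \emph{not} building a nodal interpolant. It invokes a Scott--Zhang/Girault--Scott type quasi-interpolation from \cite{CarstensenHu2018,GiraultScott2002}, where the derivative DOFs at vertices are replaced by suitable local averages (integrals against biorthogonal weights on a fixed $\T$-element of the patch), so that every DOF functional is bounded on $H^2$ with constants depending only on shape regularity. That construction already yields (b) and (c); property (a) is then enforced a posteriori by overwriting just the vertex \emph{value} coefficients with $\phi(\hat{\T})(\bx)$, which is harmless because point evaluation of $\phi$ itself \emph{is} $H^2$-bounded. To repair your argument you would need to replace the pointwise Hessian evaluations by such averaged DOFs and redo the stability and Bramble--Hilbert steps accordingly.
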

\begin{proof}The quasi-interpolation for $S^{k+2}(\hat{\T})$  was constructed  in \cite{CarstensenHu2018}  by adopting the quasi-interpolation for the higher order Argyris finite element space in \cite{GiraultScott2002} to the extended Argyris element considered therein.  It was proven in \cite{CarstensenHu2018} that  that quasi-interpolation satisfies (b)-(c). In order to preserve the value of the function at all vertices of $\T$,  it can be done by  replacing  the combination coefficient of the basis functions associated with the vertex function value degrees of freedom by the value of functions at these vertices, namely,  it is imposed as
$$
\Pi_{\T,\nabla^2}\phi(\hat{\T})(\bx)=\phi(\hat{\T})(\bx) \text{ for any } \bx\in\mathcal{V}(\T).
$$
This concludes  the proof.
\end{proof}

\begin{lemma}\label{lem:divproest}
Let $(\sigma(\T),u(\T))\in (\widetilde{\Sigma(\T)},V(\T))$ solve  \eqref{disMixElasextend}  with respect to   $\T$, and let $(\hat{\sigma}(\hat{\T}),\inteu(\hat{\T}))\in(\widetilde{\Sigma(\hat{\T})},V(\hat{\T}))$ solve \eqref{disMixElasextendauxi}. Then it holds
\begin{equation*}
  \|\sigma(\T)-\hat{\sigma}(\hat{\T})\|^2_A\lesssim\eta^2(\T, {\T\backslash\hat{\T}}).
\end{equation*}
\end{lemma}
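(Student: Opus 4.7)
The plan is to follow the classical discrete reliability strategy for residual-based mixed-method estimators, now anchored on the nestedness of the extended spaces. The two tools are the discrete Helmholtz decomposition (Lemma~\ref{lem:disHelm}) and the quasi-interpolation (Lemma~\ref{lem:inter}). First, I would note that $V(\T) \subset V(\hat{\T})$ implies $Q_{\T} f \in V(\hat{\T})$, so the divergence equations of \eqref{disMixElasextend} on $\T$ and of \eqref{disMixElasextendauxi} on $\hat{\T}$ both pin down $\div \sigma(\T) = Q_{\T} f = \div \hat{\sigma}(\hat{\T})$. Together with the nestedness $\widetilde{\Sigma(\T)} \subset \widetilde{\Sigma(\hat{\T})}$, this places $\sigma(\T) - \hat{\sigma}(\hat{\T})$ in the divergence kernel of $\widetilde{\Sigma(\hat{\T})}$, so Lemma~\ref{lem:disHelm} yields $\phi(\hat{\T}) \in S^{k+2}(\hat{\T})$ with $\sigma(\T) - \hat{\sigma}(\hat{\T}) = \boldsymbol{Curl}\boldsymbol{Curl}\,\phi(\hat{\T})$.

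Set $\psi := \phi(\hat{\T}) - \Pi_{\T,\nabla^2}\phi(\hat{\T})$. Both $\boldsymbol{Curl}\boldsymbol{Curl}\,\phi(\hat{\T}) \in \widetilde{\Sigma(\hat{\T})}$ and $\boldsymbol{Curl}\boldsymbol{Curl}\,\Pi_{\T,\nabla^2}\phi(\hat{\T}) \in \widetilde{\Sigma(\T)} \subset \widetilde{\Sigma(\hat{\T})}$ are divergence free. Testing the first equation of \eqref{disMixElasextendauxi} against each kills the pairing with $\hat{\sigma}(\hat{\T})$, and testing the first equation of \eqref{disMixElasextend} at the coarse level against $\boldsymbol{Curl}\boldsymbol{Curl}\,\Pi_{\T,\nabla^2}\phi(\hat{\T})$ kills its pairing with $\sigma(\T)$. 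Combining these three orthogonalities collapses the error identity to
\begin{equation*}
  \|\sigma(\T) - \hat{\sigma}(\hat{\T})\|_A^2 = (A\sigma(\T),\,\boldsymbol{Curl}\boldsymbol{Curl}\,\psi).
\end{equation*}

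By property (b) of Lemma~\ref{lem:inter}, $\psi$ vanishes identically on every element of $\T$ that is not refined, so the integral localizes to a sum over $K \in \T \setminus \hat{\T}$. On each such $K$, $A\sigma(\T)|_K$ is a single polynomial, and two elementwise integrations by parts followed by reassembly produce a volume contribution of the form $\int_K (\operatorname{curl}\operatorname{curl}\,A\sigma(\T))\,\psi\,dx$ together with edge contributions pairing $\psi$ and $\partial_{\tangevec}\psi$ against the jump quantities $\mathcal{J}_{e,1}$ and $\mathcal{J}_{e,2}$ on the $\T$-edges of the refined elements. On an edge shared with an unrefined neighbour $K' \in \T \cap \hat{\T}$, the identity $\psi \equiv 0$ on $K'$ and the $C^1$ regularity of $\psi$ across interior edges of $\hat{\T}$ force both $\psi$ and $\nabla\psi$ to vanish on the shared edge, killing its contribution. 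Cauchy--Schwarz, the standard trace inequality, the finite overlap of the patches $\Omega(K)$, and the local approximation estimate (c) of Lemma~\ref{lem:inter} yield
\begin{equation*}
  \|\sigma(\T) - \hat{\sigma}(\hat{\T})\|_A^2 \lesssim \eta(\T,\T\setminus\hat{\T})\,|\phi(\hat{\T})|_{2,\Omega}.
\end{equation*}

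To close the estimate, I would use the componentwise identity $|\phi(\hat{\T})|_{2,\Omega} = \|\boldsymbol{Curl}\boldsymbol{Curl}\,\phi(\hat{\T})\|_{0,\Omega} = \|\sigma(\T) - \hat{\sigma}(\hat{\T})\|_{0,\Omega}$ together with the uniform positive definiteness of $A$ to bound the last factor by $\|\sigma(\T)-\hat{\sigma}(\hat{\T})\|_A$; dividing by $\|\sigma(\T)-\hat\sigma(\hat\T)\|_A$ and squaring then yields the claim. I expect the main obstacle to be the bookkeeping in the integration-by-parts step: each surviving edge contribution must be matched against exactly one of the $h_K\|\mathcal{J}_{e,1}\|_{0,e}^2$ or $h_K^3\|\mathcal{J}_{e,2}\|_{0,e}^2$ terms on a $\T$-edge of some refined element, and one must confirm that the extra tangential--tangential freedom at new vertices in $\widetilde{\Sigma(\cdot)}$ does not contaminate the argument---this is ultimately because the integration by parts is carried out over coarse $\T$-elements, on each of which $A\sigma(\T)$ is a genuine polynomial.
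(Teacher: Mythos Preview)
Your proposal follows essentially the paper's proof: discrete Helmholtz decomposition for $\xi:=\sigma(\T)-\hat\sigma(\hat\T)$, the quasi-interpolation $\Pi_{\T,\nabla^2}$, localization to $\T\setminus\hat\T$ via property~(b), elementwise integration by parts, and Cauchy--Schwarz combined with the approximation estimate~(c).

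The one substantive difference is the closing step. You bound $\|\xi\|_0$ by $\|\xi\|_A$ through positive definiteness of $A$; this suffices for the lemma as stated (the constant in $\lesssim$ is only required to be mesh-independent) but the bound degenerates as $\lambda\to\infty$, since the smallest eigenvalue of $A$ is not assumed bounded away from zero. The paper instead observes that $\div\xi=0$ and $\int_\Omega\operatorname{tr}\xi\,dx=0$ and invokes Proposition~9.1.1 of \cite{Boffi-Brezzi-Fortin2013} to obtain $\|\xi\|_0\lesssim\|\xi\|_A$ with a $\lambda$-robust constant; this is the sharper argument and the reason the trace-integral identity is recorded.

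One minor bookkeeping point: the paper makes explicit use of property~(a) of Lemma~\ref{lem:inter} (that $\psi$ vanishes at every vertex of $\T$) to integrate the term $\langle (A\sigma(\T))t_e\cdot n_e,\,\boldsymbol{Curl}\psi\cdot n_e\rangle_e=-\langle (A\sigma(\T))t_e\cdot n_e,\,\partial_{t_e}\psi\rangle_e$ by parts along each boundary edge, which is precisely what produces the $\partial_{t_e}\big((A\sigma(\T))t_e\cdot n_e\big)$ contribution in the boundary version of $\mathcal J_{e,2}$. You will need this when you carry out the reassembly you allude to.
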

\begin{proof}
Let $\xi(\hat{\T}):=\sigma(\T)-\hat{\sigma}(\hat{\T})$. Since ${\rm div}\xi(\hat{\T})=0$, Lemma~\ref{lem:disHelm} implies that there exists $\phi(\hat{\T})\in S^{k+2}(\hat{\T})$ such that $\xi(\hat{\T})={\rm \boldsymbol{CurlCurl}}\phi(\hat{\T})$. Recall the quasi-interpolation $\Pi_{\T,\nabla^2} $ from Lemma~\ref{lem:inter} and let  $\psi(\hat{\T}):= \phi(\hat{\T})-\Pi_{\T,\nabla^2}\phi(\hat{\T})\in S^{k+2}(\hat{\T})$.  The choice  $\tau(\hat{\T})=\xi(\hat{\T})$ in    \eqref{disMixElasextendauxi} and the choice $\tau(\T)={\rm \boldsymbol{CurlCurl}}(\Pi_{\T,\nabla^2}\phi(\hat{\T}))$  in    \eqref{disMixElasextend} with respect to $\T$  plus (b) of Lemma~\ref{lem:inter} show
\begin{align*}
  \|\xi(\hat{\T})\|_A^2&=(A(\sigma(\T)-\hat{\sigma}(\hat{\T})),\xi(\hat{\T}))=(A\sigma(\T),{\rm \boldsymbol{CurlCurl}}\psi(\hat{\T}))\\
  &=\sum_{K\in{\T\backslash\hat{\T}}}(A\sigma(\T),{\rm \boldsymbol{CurlCurl}}\psi(\hat{\T}))_{T}.
\end{align*}
An integration by parts  leads to
\begin{equation}\label{eq:A3first}
  \begin{split}
   \|\xi(\hat{\T})\|_A^2=&\sum_{K\in{\T\backslash\hat{\T}}}\bigg(\big({\rm curlcurl}(A\sigma(\T)),\psi(\hat{\T})\big)_K+\sum_{e\in\mathcal{E}(K)}\Big(\langle A\sigma(\T)\tangevec\cdot \tangevec,{\rm \boldsymbol{Curl}}\psi(\hat{\T})\cdot\tangevec\rangle_e\\
   &-
   \langle{\rm curl}(A\sigma(\T))\cdot \tangevec,\psi(\hat{\T})\rangle_e\Big)+\sum_{e\in\mathcal{E}(K)}\langle A\sigma(\T)\tangevec\cdot \normalvec,{\rm \boldsymbol{Curl}}\psi(\hat{\T})\cdot \normalvec\rangle_e\bigg).
     \end{split}
\end{equation}
Since the compliance tensor $A$ is symmetric and continuous, $(A\sigma(\T)\tangevec)\cdot\normalvec =(A\sigma(\T)\normalvec)\cdot \tangevec=(\tangevec^T\sigma(\T)\normalvec)/(2\mu)$ is continuous across interior edge $e$.  Since   (a)  of Lemma~\ref{lem:inter} guarantees that $\psi(\hat{\T})$ vanishes at each vertex $\bx\in\mathcal{V}(\T)$, an integration by parts results in
\begin{equation*}
\begin{split}
 \sum_{K\in{\T\backslash\hat{\T}}}\sum_{e\in\mathcal{E}(K)}\langle A\sigma(\T)\tangevec\cdot \normalvec,{\rm \boldsymbol{Curl}}\psi(\hat{\T})\cdot \normalvec\rangle_e&=- \sum_{K\in{\T\backslash\hat{\T}}}\sum_{e\in\mathcal{E}(\Gamma)\cap\mathcal{E}(K)}\langle  A\sigma(\T)\tangevec_e\cdot \normalvec_e,\partial_{\tangevec_e}\psi(\hat{\T})\rangle_e\\
 &= \sum_{K\in{\T\backslash\hat{\T}}}\sum_{e\in\mathcal{E} (\Gamma)\cap\mathcal{E}(K)}\langle
 \partial_{\tangevec_e}(A\sigma(\T)\tangevec_e\cdot \normalvec_e),\psi(\hat{\T}) \rangle_e,
 \end{split}
\end{equation*}
This and  \eqref{eq:A3first} lead to
\begin{equation*}
   \|\xi(\hat{\T})\|_A^2 \leq \sum_{K\in\T\setminus\hat{\T}}\eta^2(\T,K)\bigg(h_K^{-4}\|\psi(\hat{\T})\|_{0,K}^2+\sum_{e\in\cE(K)}\left(h_K^{-1}\|{\rm\boldsymbol{Curl}}\psi(\hat{\T})\|_{0,e}^2+2h_K^{-3}\|\psi(\hat{\T})\|_{0,e}^2\right)\bigg)^{1/2}.
\end{equation*}
Trace inequalities,   the estimates in \eqref{eq:localest} for $\psi(\hat{\T})= \phi(\hat{\T})-\Pi_{\T,\nabla^2}\phi(\hat{\T})$ and $|\phi(\hat{\T})|_{2}\lesssim \|\xi(\hat{\T})\|_0$ prove
\begin{equation}\label{eq:A3sec}
  \begin{split}
   \|\xi(\hat{\T})\|_A^2 \lesssim&\sum_{K\in\T\setminus\hat{\T}}\eta(\T,K)|\phi(\hat{\T})|_{2,\Omega(K)}\lesssim\eta(\T,\T\setminus\hat{\T}) \|\xi(\hat{\T})\|_0.
  \end{split}
\end{equation}
Since ${\rm div}\xi(\hat{\T})=0$ and
$
\int_\Omega {\rm tr} \xi(\hat{\T}) \,dx=\int_\Omega{\rm tr}(\sigma(\T)-\hat{\sigma}(\hat{\T}))\,dx=0
$,   Proposition~9.1.1 of \cite{Boffi-Brezzi-Fortin2013} shows
\begin{equation*}
  \|\xi(\hat{\T})\|_{0}\lesssim\|\xi(\hat{\T})\|_A.
\end{equation*}
The combination with \eqref{eq:A3sec} concludes  the proof.
\end{proof}
\begin{theorem}[discrete reliability]\label{thm:disrelia}
There exists a positive constant $C$ such that
\begin{align*}
 \|\sigma(\hat{\T})-\sigma(\T)\|_A^2 \leq C\Big(\eta^2( \T ,\T\setminus\hat{\T})+{\rm osc}^2(f,\T\setminus\hat{\T})\Big).
\end{align*}
\end{theorem}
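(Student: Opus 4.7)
The plan is to combine the two auxiliary estimates that have just been established, using the intermediate solution $\hat{\sigma}(\hat{\T})$ of \eqref{disMixElasextendauxi} as a bridge between $\sigma(\T)$ and $\sigma(\hat{\T})$. Concretely, I would insert $\pm\hat{\sigma}(\hat{\T})$ and apply the triangle inequality in the $\|\cdot\|_A$ norm to obtain
\begin{align*}
\|\sigma(\hat{\T})-\sigma(\T)\|_A^2 \leq 2\|\sigma(\hat{\T})-\hat{\sigma}(\hat{\T})\|_A^2+2\|\hat{\sigma}(\hat{\T})-\sigma(\T)\|_A^2.
\end{align*}

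The first term on the right is controlled by Lemma~\ref{lem:intepost}, which gives $\|\sigma(\hat{\T})-\hat{\sigma}(\hat{\T})\|_A^2 \lesssim \mathrm{osc}^2(f,\T\setminus\hat{\T})$. The second term is controlled by Lemma~\ref{lem:divproest}, which gives $\|\hat{\sigma}(\hat{\T})-\sigma(\T)\|_A^2 \lesssim \eta^2(\T,\T\setminus\hat{\T})$. Absorbing both constants into a single $C$ yields the claimed bound.

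There is essentially no obstacle at this stage: the heavy lifting has already been done in Lemma~\ref{lem:intepost} (where the key point was that $\hat{\sigma}(\hat{\T})-\sigma(\hat{\T})$ discretizes an elasticity problem with right-hand side $(1-Q_\T)Q_{\hat{\T}}f$, whose data oscillation is supported on $\T\setminus\hat{\T}$) and in Lemma~\ref{lem:divproest} (which required the nestedness $\widetilde{\Sigma(\T)}\subset\widetilde{\Sigma(\hat{\T})}$, the discrete Helmholtz decomposition of Lemma~\ref{lem:disHelm}, and the local quasi-interpolation $\Pi_{\T,\nabla^2}$ from Lemma~\ref{lem:inter} that vanishes on unrefined elements so that only contributions from $K\in\T\setminus\hat{\T}$ survive). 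The final assembly is a one-line triangle-inequality argument, which is why I expect the proof to be very short.
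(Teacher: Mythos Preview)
Your proposal is correct and matches the paper's own proof exactly: the paper's argument is the single sentence ``A triangle inequality plus Lemma~\ref{lem:intepost} and~\ref{lem:divproest} conclude the proof.'' Your additional commentary on where the heavy lifting occurs is accurate and more detailed than what the paper provides.
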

\begin{proof}
A triangle inequality plus Lemma~\ref{lem:intepost} and  \ref{lem:divproest} conclude the proof.
\end{proof}

With the quasi-orthogonality in Theorem~\ref{thm:quasiotho} and the discrete reliability in Theorem~\ref{thm:disrelia}, the analysis as in \cite{axioms,CKNS08,HuYu2018, HuangHuangXu2011,HuangXu}  will lead to the convergence and optimal convergence of the adaptive algorithm in Subsect.~\ref{sec:errada}.
\begin{lemma}
[estimator reduction]\label{lem:estireduc}Let $(\sigma(\T_{\ell}),u(\T_{\ell}))\in\widetilde{\Sigma(\T_\ell)}\times V(\T_\ell)$ solve \eqref{disMixElasextend} over the nested triangulations $\T_\ell$ and $\T_{\ell-1}$, respectively. Then given any positive constant $\epsilon$, there exists $\lambda:=1-2^{-1/2}<1$ and $C_\epsilon>0$ such that
\begin{align*}
\eta^2(\T_\ell)\leq (1+\epsilon)(\eta^2(\T_{\ell-1})-\lambda\eta^2(\T_{\ell-1},\mathcal{M}_{\ell-1}))+C_\epsilon\|\sigma(\T_{\ell})-\sigma(\T_{\ell-1})\|_A ^2
\end{align*}
and
\begin{align*}
{\rm osc}^2(f,\T_\ell)\leq{\rm osc}^2(f,\T_{\ell-1})-\lambda{\rm osc}^2(f,\T_\ell\setminus\T_{\ell+1}).
\end{align*}
\end{lemma}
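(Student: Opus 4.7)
My plan is to adapt the standard estimator-reduction argument of Cascón--Kreuzer--Nochetto--Siebert and its generalizations (cf.~\cite{axioms,CKNS08}) to the residual estimator \eqref{eq:estimator}. The key geometric input is that NVB bisection halves the area of a refined triangle, so that every child $K\subset K'$ of a bisected element satisfies $h_K=|K|^{1/2}\leq 2^{-1/2}h_{K'}$, which will be the source of the factor $\lambda=1-2^{-1/2}$. Abbreviate $\sigma_\ell:=\sigma(\T_\ell)$ and $\sigma_{\ell-1}:=\sigma(\T_{\ell-1})$, and for any admissible mesh $\T$, any $K\in\T$, and any $\tau\in\widetilde{\Sigma(\T)}$ write $\eta^2(\T,K;\tau)$ for the right-hand side of \eqref{eq:estimator} with $\sigma(\hat\T)$ replaced by $\tau$, so that $\eta^2(\T,K)=\eta^2(\T,K;\sigma(\T))$.

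First I would fix any $K\in\T_\ell$ and use the triangle inequality inside each $L^2$-norm in $\eta^2(\T_\ell,K;\sigma_\ell)$, followed by Young's inequality with parameter $\epsilon$, to obtain
\begin{align*}
\eta^2(\T_\ell,K;\sigma_\ell)\leq (1+\epsilon)\,\eta^2(\T_\ell,K;\sigma_{\ell-1})+(1+\epsilon^{-1})\,R^2(\T_\ell,K;\sigma_\ell-\sigma_{\ell-1}),
\end{align*}
where $R^2$ denotes the same residual expression evaluated on the \emph{difference} $\sigma_\ell-\sigma_{\ell-1}$. Because this difference is piecewise polynomial on $\T_\ell$, I can apply the standard inverse and trace inequalities $h_K^2\|\operatorname{curlcurl}(A\zeta)\|_{0,K}\lesssim\|\zeta\|_{0,K}$ and $h_K^{1/2}\|[A\zeta\, t_e\cdot t_e]_e\|_{0,e}\lesssim\|\zeta\|_{0,\omega_K}$, and analogously for the $\mathcal{J}_{e,2}$ terms, to derive $R^2(\T_\ell,K;\sigma_\ell-\sigma_{\ell-1})\lesssim\|\sigma_\ell-\sigma_{\ell-1}\|_{A,\omega_K}^2$ (using boundedness of $A$). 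Summing over $K$ and using finite overlap of the patches $\omega_K$ absorbs this term into $C_\epsilon\|\sigma_\ell-\sigma_{\ell-1}\|_A^2$.

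Next, I would estimate the shifted estimator $\sum_K\eta^2(\T_\ell,K;\sigma_{\ell-1})$ by splitting $\T_\ell=(\T_\ell\cap\T_{\ell-1})\cup(\T_\ell\setminus\T_{\ell-1})$. For $K\in\T_\ell\cap\T_{\ell-1}$ the contribution equals $\eta^2(\T_{\ell-1},K)$ exactly, since NVB conformity preserves the edges of $K$ and $\sigma_{\ell-1}$ is polynomial on the (possibly refined) neighbour. For a child $K\subset K'$ with $K'\in\T_{\ell-1}\setminus\T_\ell$, the crucial scaling $h_K\leq 2^{-1/2}h_{K'}$ together with monotonicity of the $L^2$-norms across subdomains yields
\begin{align*}
\sum_{K\subset K',\,K\in\T_\ell}\eta^2(\T_\ell,K;\sigma_{\ell-1})\leq 2^{-1/2}\,\eta^2(\T_{\ell-1},K'),
\end{align*}
because the factor $h_K$ in the $\mathcal{J}_{e,1}$-term is the slowest-decaying weight (the $h_K^4$ and $h_K^3$ weights decay faster and only help). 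Adding over the two regions gives
\begin{align*}
\sum_{K\in\T_\ell}\eta^2(\T_\ell,K;\sigma_{\ell-1})\leq \eta^2(\T_{\ell-1})-(1-2^{-1/2})\,\eta^2(\T_{\ell-1},\mathcal{M}_{\ell-1}),
\end{align*}
which combined with the first paragraph establishes the first inequality with $\lambda=1-2^{-1/2}$.

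Finally, for the oscillation inequality (reading $\T_{\ell-1}\setminus\T_\ell$ in place of $\T_\ell\setminus\T_{\ell+1}$, which appears to be a misprint), I would exploit $V(\T_{\ell-1})\subset V(\T_\ell)$, so that $\|f-Q_{\T_\ell}f\|_{0,K}\leq\|f-Q_{\T_{\ell-1}}f\|_{0,K}$ elementwise. On an unrefined $K\in\T_\ell\cap\T_{\ell-1}$ this gives $h_K^2\|f-Q_{\T_\ell}f\|_{0,K}^2\leq h_K^2\|f-Q_{\T_{\ell-1}}f\|_{0,K}^2$. On a refined $K'\in\T_{\ell-1}\setminus\T_\ell$ with children $K\subset K'$, the bound $h_K^2\leq h_{K'}^2/2$ and additivity of the $L^2$-norm give $\sum_{K\subset K'}h_K^2\|f-Q_{\T_\ell}f\|_{0,K}^2\leq\tfrac12 h_{K'}^2\|f-Q_{\T_{\ell-1}}f\|_{0,K'}^2$, a reduction by at least $\tfrac12\leq 1-\lambda$. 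Summation yields the claim.

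The main obstacle I anticipate is the careful bookkeeping for the edge jumps $\mathcal{J}_{e,1},\mathcal{J}_{e,2}$ on an edge $e$ of an unrefined $K\in\T_\ell\cap\T_{\ell-1}$ whose opposite neighbour has been bisected: one must verify that the inverse estimate used to bound $R^2$ still yields a local bound in terms of $\|\sigma_\ell-\sigma_{\ell-1}\|_A$ on the patch, and that the shifted estimator is truly unchanged on such $K$. Both facts rely on NVB conformity and on the fact that $\sigma_{\ell-1}$, being polynomial on $K'\in\T_{\ell-1}$, has a well-defined, single-valued trace on any sub-edge of $e$ inside $K'$, so the jump $[A\sigma_{\ell-1}t_e\cdot t_e]_e$ evaluated on $\T_\ell$ coincides edgewise with the one evaluated on $\T_{\ell-1}$.
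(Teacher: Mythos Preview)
Your proposal is correct and follows exactly the approach the paper intends: the paper's own proof is the single sentence ``The proof follows the same arguments as in \cite[Corollary~4.4]{CKNS08},'' and what you have written is precisely that argument carried out for the estimator \eqref{eq:estimator}, including the correct identification of $\lambda=1-2^{-1/2}$ from NVB area-halving and the correct handling of new interior edges and unrefined elements with refined neighbours. Your observation that $\T_\ell\setminus\T_{\ell+1}$ in the oscillation inequality should read $\T_{\ell-1}\setminus\T_\ell$ is also right.
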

\begin{proof}
The proof follows the same arguments as in  \cite[Corollary~4.4]{CKNS08}.
\end{proof}
\begin{theorem}Given $f\in L^2(\Omega;\mathbb{R}^2)$, let $(\sigma,u)$ denote  the exact solution   of \eqref{eqn1}, and $(\sigma(\T_{\ell}),u(\T_{\ell}))$ and $(\sigma(\T_{\ell-1}),u(\T_{\ell-1}))$ denote the discrete solutions over the nested triangulations $\cT_\ell$ and $\cT_{\ell-1}$, respectively. Then there exist positive constants $0<\alpha<1$, $\beta>0$, $\gamma>0$ such that
\begin{equation*}
  E_\ell\leq\alpha E_{\ell-1}
\end{equation*}
with
\begin{equation*}
  E_\ell=\|\sigma-\sigma(\T_{\ell})\|^2_A+\gamma\eta^2(\cT_\ell)+(\beta+\gamma){\rm osc}^2(f,\cT_\ell).
\end{equation*}
\end{theorem}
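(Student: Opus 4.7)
The plan is to follow the Cascón--Kreuzer--Nochetto--Siebert style contraction argument, now that all three structural ingredients specific to the extended element (quasi-orthogonality, discrete reliability, and estimator reduction) have been put in place. The proof will combine the three inequalities with a carefully chosen set of weights so that the right-hand side collapses into a multiple of $E_{\ell-1}$ strictly smaller than $E_{\ell-1}$.

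First, I would start from the quasi-orthogonality identity of Theorem~\ref{thm:quasiotho} applied to the pair $(\T_{\ell-1},\T_\ell)$, rearranged as
\begin{align*}
\|\sigma-\sigma(\T_\ell)\|_A^2 \leq \tfrac{1}{1-\delta}\|\sigma-\sigma(\T_{\ell-1})\|_A^2 - \tfrac{1}{1-\delta}\|\sigma(\T_\ell)-\sigma(\T_{\ell-1})\|_A^2 + \tfrac{C_0}{\delta(1-\delta)}\mathrm{osc}^2(f,\T_{\ell-1}\setminus\T_\ell),
\end{align*}
for some small $\delta\in(0,1)$ to be fixed later. Then I add $\gamma\eta^2(\T_\ell)+(\beta+\gamma)\mathrm{osc}^2(f,\T_\ell)$ to both sides and invoke the estimator-reduction Lemma~\ref{lem:estireduc}: the $\gamma\eta^2(\T_\ell)$ term is controlled by $\gamma(1+\epsilon)(\eta^2(\T_{\ell-1})-\lambda\eta^2(\T_{\ell-1},\M_{\ell-1}))+\gamma C_\epsilon\|\sigma(\T_\ell)-\sigma(\T_{\ell-1})\|_A^2$, while the oscillation term is contracted analogously.

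Next I choose $\gamma$ small enough that $\gamma C_\epsilon\leq 1/(1-\delta)$, which forces the $\|\sigma(\T_\ell)-\sigma(\T_{\ell-1})\|_A^2$ contribution to be nonpositive and drop it. At this stage the right-hand side becomes
\begin{align*}
\tfrac{1}{1-\delta}\|\sigma-\sigma(\T_{\ell-1})\|_A^2 + \gamma(1+\epsilon)\eta^2(\T_{\ell-1}) + (\beta+\gamma)\mathrm{osc}^2(f,\T_{\ell-1}) - \gamma(1+\epsilon)\lambda\,\eta^2(\T_{\ell-1},\M_{\ell-1}) - \lambda(\beta+\gamma)\mathrm{osc}^2(f,\M_{\ell-1}),
\end{align*}
up to absorbing the $C_0/(\delta(1-\delta))$ oscillation remainder into the coefficient of $(\beta+\gamma)\mathrm{osc}^2$ (this dictates the relative size of $\beta$ versus $\gamma$). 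I then apply the Dörfler marking selected in the \textbf{MARK} step of Algorithm~\ref{alg:amfem} to bound the two marked terms jointly from below by $\min\{\gamma(1+\epsilon)\lambda,\lambda(\beta+\gamma)\}\,\theta(\eta^2(\T_{\ell-1})+\mathrm{osc}^2(f,\T_{\ell-1}))$.

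Finally, I split that marked-quantity gain into two halves: one half is kept against the pure estimator and oscillation contributions of $E_{\ell-1}$, and the other half is converted, via the reliability estimate \eqref{reliability}, into a bound $C_{Rel}^{-1}\|\sigma-\sigma(\T_{\ell-1})\|_A^2$, which is then used to pull the factor in front of $\|\sigma-\sigma(\T_{\ell-1})\|_A^2$ down from $1/(1-\delta)$ to some $\alpha<1$. Choosing in order $\epsilon$ small, then $\gamma$ small (as above), then $\beta$ large enough to absorb the osc remainder, and finally $\delta$ small enough that the reliability-supplied reduction beats the $1/(1-\delta)$ blow-up yields the contraction $E_\ell\leq\alpha E_{\ell-1}$ for some $\alpha\in(0,1)$. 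The main technical obstacle is this parameter orchestration: each of $\delta,\epsilon,\gamma,\beta$ must be tuned in a specific order so that every inequality's constant lines up, and verifying that a consistent choice exists amounts to checking that the system of constraints is indeed compatible, exactly as in \cite{CKNS08,HuYu2018}.
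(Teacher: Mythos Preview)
Your proposal is correct and follows precisely the approach the paper intends: the paper's own proof merely states that the estimator reduction (Lemma~\ref{lem:estireduc}), the reliability \eqref{reliability}, and the quasi-orthogonality (Theorem~\ref{thm:quasiotho}) conclude the argument and refers to \cite{axioms,HuYu2018,HuangXu} for details, which is exactly the CKNS-style parameter balancing you spell out. One cosmetic remark: in your opening sentence you list \emph{discrete} reliability among the three ingredients, but the contraction step (as you in fact carry it out) uses the continuous reliability \eqref{reliability}; discrete reliability enters only in the subsequent optimality theorem.
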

\begin{proof}Lemma~\ref{lem:estireduc},
the reliability \eqref{reliability} and the quasi-orthogonality conclude  the proof. See the details in \cite{axioms, HuYu2018,HuangXu}.
\end{proof}
For $s>0$, define the approximation class $\mathbb{A}_s$ as
\begin{equation*}
\mathbb{A}_s=\Big\{(\sigma,f):|\sigma,f|_s<\infty\text{ with }|\sigma,f|_s:=\sup_{N>0}\big(N^s\inf_{\ |\cT|-\ |\cT_0|\leq N}\inf_{\tau\in\widetilde{\Sigma(\cT)}}\|\sigma-\tau\|^2_A+{\rm osc}^2(f,\cT)\big)\Big\}.
\end{equation*}
\begin{theorem}[optimality]
Let $\mathcal{M}_\ell$ be a set of marked elements with minimal cardinality, $(\sigma,u)$ the solution of \eqref{eqn1}, and $(\cT_\ell,\widetilde{\Sigma(\T_\ell)}\times V(\T_\ell),\sigma(\T_{\ell}),u(\T_{\ell}))$ the sequence of triangulations, finite element spaces, and discrete solutions produced by the adaptive FEMs with the marking parameter $\theta$. Then it holds that
\begin{equation*}
  \|\sigma-\sigma(\T_{\ell})\|^2_A+{\rm osc}^2(f,\cT_\ell)\lesssim |\sigma,f|_s(|\cT_\ell| -|\cT_0|)^{-s}\text{ for }(\sigma,f)\in\mathbb{A}_s.
\end{equation*}
\end{theorem}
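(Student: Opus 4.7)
The plan is to follow the now-standard axiomatic framework for optimality of adaptive FEMs (as in \cite{axioms,CKNS08,HuYu2018,HuangXu}), verifying that the four axioms of adaptivity hold in our setting and then invoking the general theorem. Concretely, the four ingredients are already available: stability and reduction of the residual estimator $\eta$ (the easy half of Lemma~\ref{lem:estireduc}, using $h_K\le 2^{-1/2}h_{K'}$ for $K'\in\T_{\ell-1}\setminus\T_\ell$ contained in $K\in\T_\ell$); discrete reliability (Theorem~\ref{thm:disrelia}); and quasi-orthogonality (Theorem~\ref{thm:quasiotho}). The preceding contraction result gives $E_\ell\le\alpha E_{\ell-1}$ which encodes both linear convergence and the estimator reduction inequality.

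The first main step is to characterize the D\"orfler marking property: there exist $0<\theta_*<\theta_{\max}$ and $\rho<1$ such that if
\[
\|\sigma(\T)-\sigma(\hat{\T})\|_A^2+\beta\,{\rm osc}^2(f,\hat{\T})\le\rho\bigl(\|\sigma-\sigma(\T)\|_A^2+\beta\,{\rm osc}^2(f,\T)\bigr),
\]
then $\T\setminus\hat{\T}$ satisfies the bulk criterion $\theta_*(\eta^2(\T)+{\rm osc}^2(f,\T))\le\eta^2(\T,\T\setminus\hat{\T})+{\rm osc}^2(f,\T\setminus\hat{\T})$. This is a direct consequence of the discrete reliability together with reliability \eqref{reliability}: use the triangle inequality to bound $\eta^2(\T)+{\rm osc}^2(f,\T)$ by $\|\sigma-\sigma(\T)\|_A^2+{\rm osc}^2(f,\T)$ via $C_{Rel}$, split the error into $\|\sigma-\sigma(\hat{\T})\|_A^2$ plus $\|\sigma(\T)-\sigma(\hat{\T})\|_A^2$, invoke the hypothesis and then bound the remaining discrete error by Theorem~\ref{thm:disrelia}.

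The second main step is to exploit the approximation class $\mathbb{A}_s$: for every $(\sigma,f)\in\mathbb{A}_s$ and every tolerance $\varepsilon>0$, there is an optimal refinement $\T_\varepsilon$ of $\T_0$ with $|\T_\varepsilon|-|\T_0|\lesssim\varepsilon^{-1/s}|\sigma,f|_s^{1/s}$ realizing the best approximation up to $\varepsilon$. Choose $\varepsilon$ as a small constant multiple of $E_\ell$, let $\hat{\T}_\ell$ be the overlay of $\T_\ell$ with $\T_\varepsilon$, and apply the D\"orfler-marking characterization to $(\T_\ell,\hat{\T}_\ell)$ to conclude that the set $\T_\ell\setminus\hat{\T}_\ell$ is a valid marking set for which the bulk criterion holds. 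By the minimal-cardinality marking in Algorithm~\ref{alg:amfem}, $|\mathcal{M}_\ell|\lesssim|\T_\ell\setminus\hat{\T}_\ell|\le|\T_\varepsilon|-|\T_0|\lesssim E_\ell^{-1/(2s)}|\sigma,f|_s^{1/(2s)}$.

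The final step is to combine the cardinality bound on $|\mathcal{M}_\ell|$ with the closure estimate for newest-vertex bisection, $|\T_\ell|-|\T_0|\lesssim\sum_{j=0}^{\ell-1}|\mathcal{M}_j|$ (from Stevenson \cite{Stevenson2008}), and the linear convergence $E_j\le\alpha^{\ell-j}E_\ell$ from the preceding contraction theorem to sum a geometric series and obtain $|\T_\ell|-|\T_0|\lesssim E_\ell^{-1/(2s)}|\sigma,f|_s^{1/(2s)}$, which rearranges to the claim. The main obstacle I anticipate is verifying that the overlay construction and the cardinality estimate for the marked set go through with the extended, non-standard space $\widetilde{\Sigma(\hat{\T})}$; however, since nestedness $\widetilde{\Sigma(\T)}\subset\widetilde{\Sigma(\hat{\T})}$ is the only structural fact required (the same role nested spaces play in the standard theory), and since reliability, efficiency, discrete reliability, and quasi-orthogonality have all been established in the preceding results, the axiomatic machinery of \cite{axioms,HuYu2018} applies verbatim and delivers the optimality estimate.
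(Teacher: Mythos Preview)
Your proposal is correct and takes essentially the same approach as the paper, which simply invokes the axiomatic framework of \cite{axioms,HuYu2018,HuangXu} with the already-established ingredients (estimator reduction in Lemma~\ref{lem:estireduc}, quasi-orthogonality in Theorem~\ref{thm:quasiotho}, discrete reliability in Theorem~\ref{thm:disrelia}, and efficiency \eqref{efficiency}); you have merely spelled out what those references do. One minor slip: the contraction gives $E_\ell\le\alpha^{\ell-j}E_j$ (equivalently $E_j^{-1/(2s)}\le\alpha^{(\ell-j)/(2s)}E_\ell^{-1/(2s)}$), not $E_j\le\alpha^{\ell-j}E_\ell$, and this is the direction needed for the geometric sum; also the containment in your reduction remark should read $K\subset K'$ for $K\in\T_\ell$ and $K'\in\T_{\ell-1}\setminus\T_\ell$.
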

\begin{proof}
This follows from \cite{axioms,HuYu2018,HuangXu} with Lemma~\ref{lem:estireduc}, the quasi-orthogonality,  the discrete reliability  and the efficiency \eqref{efficiency} .
\end{proof}
\section{Extended stress spaces at vertex corners}\label{sec:allcorner}
This section is devoted to treat  the  boundary condition with $|\Gamma_N|>0$ and $g\not\equiv0$ in \eqref{eqn1}.
\subsection{Two dimensional case}\label{sec:corner}Recall the discrete stress space  $\Sigma(\hat{\T})$ in \eqref{Def:Odisstress}.
  To impose the general traction boundary condition $g$ on $\Gamma_N$, it requires  some approximation   $g(\hat{\T})=\alpha(\hat{\T})\normalvec|_{\Gamma_N}$ with $\alpha(\hat{\T})\in\Sigma(\hat{\T})$. Since $\alpha(\hat{\T}) $ is continuous at each boundary vertex,  $g(\hat{\T})$ may not be taken as the nodal interpolation of $g$. To see it, consider a situation depicted in   Figure~\ref{fig:cook}\subref{fig:sub2a} where  a corner vertex $\bx_c$ on the boundary of  the polygonal domain $\Omega$ is the unique intersected point of two boundary edges $e_+$ and $e_-$. Let $t_i$ and $ n_i$ denote the unit tangential and outward normal vectors of $e_i$, $i=+, -$. If the traction boundary condition $g|_{e_i}$, $i=+, -$ is consistent (continuous)
  in the sense that $( n_-^Tg|_{e_+})(\bx_c)=( n_+^Tg|_{e_-})(\bx_c)$, the usual nodal interpolation can be well defined  as follows
   \begin{equation}\label{eq4.1}
   S_{11}\varphi_{\bx_c}(\bx)\mathbb{S}_1+S_{12}\varphi_{\bx_c}(\bx)\mathbb{S}_2
   +S_{22}\varphi_{\bx_c}(\bx)\mathbb{S}_3+\tau
   \end{equation}
   with $\tau\in \Sigma(\hat{\T})$ and vanishes at $\bx_c$. Recall that $\varphi_{\bx_c}(\bx)$ is the nodal basis function associated to vertex $\bx_c$ of the scalar-valued Lagrange element of order $k$, and $\mathbb{S}_i$ is a canonical basis of the symmetric matrix space $\mathbb{S}$. The combination
   coefficients  $S_{11}$, $S_{12}$, $S_{22}$ are three components of the solution $S \in\mathbb{S}$ of the following system of equations:
      $$
      \left\{\begin{array}{l}  n_+^TS  n_+=( n_+^Tg|_{e_+})(\bx_c),\\  n_-^TS  n_-=( n_-^Tg|_{e_-})(\bx_c), \\  n_+^T S  n_-=( n_+^Tg|_{e_-})(\bx_c). \\ \end{array} \right.
      $$
   However, a general traction boundary condition can be inconsistent (discontinuous) in the sense that $ n_-^Tg|_{e_+}(\bx_c)\not=  n_+^T g|_{e_-}(\bx_c)$. Such inconsistency (discontinuity) causes an essential difficulty for imposing such a boundary condition for the   discrete stress space $\Sigma(\hat{\T})$. In particular, the usual nodal 
   interpolation as in \eqref{eq4.1} might not be defined since the combination coefficients $S_{11}$, $S_{12}$,  and $S_{22}$
    have to  satisfy the following four equations
    $$
      \left\{\begin{array}{l}  n_+^TS  n_+=( n_+^Tg|_{e_+})(\bx_c),\\  n_-^TS  n_-=( n_-^Tg|_{e_-})(\bx_c), \\  n_+^T S  n_-=( n_+^Tg|_{e_-})(\bx_c),\\
       n_-^T S  n_+=( n_-^Tg|_{e_+})(\bx_c), \\ \end{array} \right.
      $$
    while because of the inconsistency, there does not exist a solution of the above system of equations. 
     As a result, the traction boundary condition can not be exactly
  imposed even if $g|_{e_i}$, $i=+, -$,  is a polynomial of degree not bigger than $k$ for this case.
    The least-square constraint was employed in  \cite{CarstensenGunther2008} to deal with this case. The subsequent analysis  extends the treatment of non-nestedness in Subsect.~\ref {sec:extend}   to allow for an exact nodal interpolation at  corner vertices.

    The idea to overcome such a difficulty is to split  the triangle at the corner into two sub-triangles and then relax the continuity of the pure  tangential component across the common edge of these  two sub-triangles. 
 In order to accomplish this, first divide $K$ into a patch consisting of two triangles $K_+$ and $K_-$ and  $e=K_+\cap K_-$ in Figure~\ref{fig:cook}\subref{fig:sub2b}.  The pure  tangential component of discrete stress does not have to be continuous across $e$ at $\bx_c$. Therefore, one can split this degree of freedom into two separate degrees of freedom in $K_+$ and $K_-$, respectively.  In Figure~\ref{fig:cook}\subref{fig:sub2c}, the solid points represent these two separate degrees of freedom and the two circles represent the degrees of freedom of normal components across $e$. 
\begin{figure}[htbp]\centering                                                       
\subfigure[Before remedy]{                    
\begin{minipage}{3.5cm}\centering                                                          
\setlength{\unitlength}{1.4cm}
\begin{picture}(1,2)
\put(0,1){\line(1,-1){1}}\put(0.2,1.6){  $e_+$} \put(0.2,0.3){  $e_-$}
\put(1,2){\line(0,-1){2}}\put(-0.4,1){$\bx_c$}
\put(0,1){\line(1,1){1}}
\put(0,1){\circle{0.1}}
 \put(0.08,1.06){\circle{0.1}}
 \put(0.08,0.94){\circle{0.1}}
  \put(0.5,0.9){  $K$}
\end{picture}           \label{fig:sub2a}          
\end{minipage}}
\subfigure[Divide into two triangles]{                    
\begin{minipage}{3.5cm}\centering                                                          
\setlength{\unitlength}{1.4cm}
\begin{picture}(1,2)
\put(0,1){\line(1,-1){1}}
\put(0,1){\line(1,0){1}}
\put(1,2){\line(0,-1){2}}\put(-0.4,1){$\bx_c$}
\put(0,1){\line(1,1){1}}
\put(0,1){\circle{0.1}}
 \put(0.08,1.06){\circle{0.1}}\put(0.2,1.6){  $e_+$} \put(0.2,0.3){  $e_-$}
 \put(0.08,0.94){\circle{0.1}}
  \put(1,1){  $e$}  \put(0.5,1.3){  $K_+$}  \put(0.5,0.5){  $K_-$}
\end{picture}           \label{fig:sub2b}          
\end{minipage}}\subfigure[Split the tangential component]{                    
\begin{minipage}{3.5cm}\centering                                                          
\setlength{\unitlength}{1.4cm}
\begin{picture}(1,2)
\put(0,0.93){\line(1,-1){1}}
\put(1,2.07){\line(0,-1){1}}
\put(1,0.93){\line(0,-1){1}}\put(-0.4,1){$\bx_c$}
\put(0,1.07){\line(1,1){1}}
\put(0,1.07){\line(1,0){1}}
\put(0,0.93){\line(1,0){1}}
\put(0,1){\circle{0.1}}
\put(0.1,1){\circle{0.1}}
 \put(0.08,1.11){\circle*{0.1}}\put(0.2,1.6){  $e_+$} \put(0.2,0.3){  $e_-$}
 \put(0.08,0.88){\circle*{0.1}}
  \put(1,0.9){  $e$}  \put(0.5,1.3){  $K_+$}  \put(0.5,0.5){  $K_-$}
\end{picture}       \label{fig:sub2c}      
\end{minipage}   }
\subfigure[After remedy]{                    
\begin{minipage}{3cm}\centering                                                          
\setlength{\unitlength}{1.4cm}
\begin{picture}(1,2)
\put(0,1){\line(1,-1){1}}
\put(1,2){\line(0,-1){1}}
\put(1,1){\line(0,-1){1}}\put(-0.4,1){$\bx_c$}
\put(0,1){\line(1,1){1}}
\put(0,1){\line(1,0){1}}    \put(1,0.9){  $e$}  \put(0.5,1.3){  $K_+$}  \put(0.5,0.5){  $K_-$} \put(0.2,1.6){  $e_+$} \put(0.2,0.3){  $e_-$}
\put(-0.25,1.07){ $\Nwarrow$}
\put(-0.18,0.81){{$\Swarrow$}}
\end{picture}       \label{fig:sub2d}      
\end{minipage}   }\caption{Degrees  of freedom at corner  vertex $\bx_c$} 
\label{fig:cook}                                                        
\end{figure}
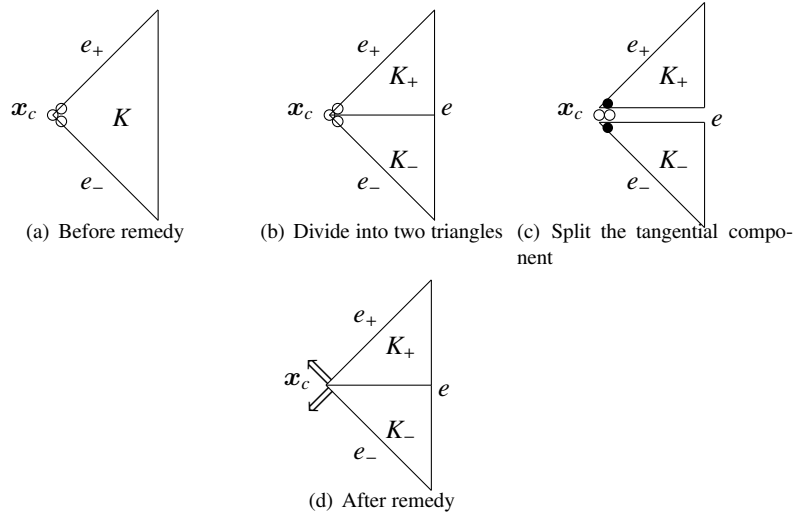
In order to define the nodal interpolation,  the four basis functions associated to vertex $\bx_c$, as depicted in Figure~\ref{fig:cook}\subref{fig:sub2d},  of the extended stress space will be presented as follows
\begin{equation}\label{eq:special}
\begin{split}
  \tau_1&=\varphi_{\bx_c}
  \begin{cases}
  \mathbb{S}_{e_+,1}^\perp+c_1\mathbb{S}_{e_+}  & \text{ on }K_+\\
  d_1\mathbb{S}_{e_-} &\text{ on }K_-
  \end{cases},\
  \tau_2=\varphi_{\bx_c}\begin{cases}
    \mathbb{S}_{e_+,2}^\perp+c_2\mathbb{S}_{e_+}  & \text{ on }K_+\\
d_2\mathbb{S}_{e_-}  &\text{ on }K_-
  \end{cases}\\
    \tau_3&=\varphi_{\bx_c}\begin{cases}
 c_3\mathbb{S}_{e_+}  & \text{ on }K_+\\
  \mathbb{S}_{e_-,1}^\perp+d_3\mathbb{S}_{e_-}  &\text{ on }K_-
  \end{cases},\,
  \tau_4=\varphi_{\bx_c}\begin{cases}
  c_4\mathbb{S}_{e_+}  & \text{ on }K_+\\
 \mathbb{S}_{e_-,2}^\perp+d_4\mathbb{S}_{e_-}  &\text{ on }K_-
  \end{cases}\\
  \end{split}
\end{equation}
with the Lagrange nodal basis function $ \varphi_{\bx_c}$  associated to   $\bx_c$ and  the other notation  defined in Section \ref{sec:Prelim}.
One can compute the constants $c_i,d_i$ for $1\leq i\leq 4$ by using the normal continuity of $\tau_i$ across $e$. For instance, given the   normal vector $n_e$ of $e$, the normal continuity of $\tau_1$ results in the following equations
 \begin{equation}\label{eq:solv}
 \big(\mathbb{S}_{e_+,1}^\perp+c_1\mathbb{S}_{e_+}\big)n_e= d_1\mathbb{S}_{e_-}n_e.
 \end{equation}
Recall the definition of the matrix $\mathbb{S}_{e_+}=t_{e_+}t_{e_+}^T$ for edge $e_+$ with   tangential vector $t_{e_+}$ and the analogy for  $\mathbb{S}_{e_-}$. Assuming $t_{e+}=(a\  b)^T$, define its perpendicular row vector by  $t_{e_+}^\perp=(b \  -\hspace{-0.1mm}a)$; $t_{e_-}^\perp$ is similar defined for $t_{e-}$. Let $D$ denote the determinant of the matrix $\begin{pmatrix}t_{e_+}\ t_{e_-}\end{pmatrix}$.
Elementary computations eventually lead to the inverse matrix
\[\begin{pmatrix}\mathbb{S}_{e_+}n_e\ -\mathbb{S}_{e_-}n_e\end{pmatrix}^{-1}=\begin{pmatrix}(t_{e_+}^Tn_e)t_{e_+}\ -(t_{e_-}^Tn_e)t_{e_-}\end{pmatrix}^{-1}=\frac{1}{D}\begin{pmatrix}
\frac{1}{t_{e_+}^Tn_e}t_{e_-}^\perp\\
\frac{1}{t_{e_-}^Tn_e}t_{e_+}^\perp
\end{pmatrix}.
\]There always exist unique   $c_1,d_1$ to \eqref{eq:solv}  unless $e_+$ is parallel to $e_-$.
\begin{remark}[more triangles]\label{rem:meshrefine}Suppose there are three triangles $K_+,K,K-$   with $e_1=K_+\cap K$ and $e_2=K_-\cap K$ around the corner vertex $\bx_c$ shown in Figure~\ref{fig:ref}.
\begin{figure}[htb]                   
\setlength\unitlength{0.6pt} \begin{picture}(300,130)(0,0)
\put(135,115){$\bx_c$}
  \put(85,36){\text{\small$K_+$}}
  \put(73,70){\text{\small$e_+$}}
    \put(107,50){\text{\small$e_1$}}
        \put(162,50){\text{\small$e_2$}}
   \put(195,70){\text{\small$e_-$}}
    \put(133,21){\text{\small$K$}}
    \put(178,36){\text{\small$K_-$}}
    \put(10,-10){\begin{picture}(-20,80)(20,0)
        \put(110.3,0){\line(-1,1){49.3}}
         \put(189.8,0){\line(1,1){49.3}}
         \put(110.2,0){\line(1,0){79.7}}
     \put(61.6,49.2){\line(5,4){88}}
       \put(150,120){\line(-2,-6){40}}
        \put(150,120){\line(2,-6){40}}
      \put(238.5,49.2){\line(-5,4){88} }
   \end{picture} }
    \end{picture}
\caption{Three triangles}\label{fig:ref}
\end{figure}
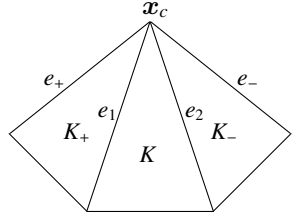In order to  relax the $C^0$ continuity at $\bx_c$,
a similar argument as in the case of two triangles leads to   five new basis functions at $\bx_c$  as follows:
\begin{align}\label{eq:specialbas}
\begin{split}
  \tau_1&=\varphi_{\bx_c}
  \begin{cases}
  \mathbb{S}_{e_+,1}^\perp+c_1\mathbb{S}_{e_+}  & \text{ on }K_+\\
  d_1\mathbb{S}_{e_2} &\text{ on }K\\
  0&\text{ on }K_-\\
  \end{cases},\
  \tau_2=\varphi_{\bx_c}\begin{cases}
    \mathbb{S}_{e_+,2}^\perp+c_2\mathbb{S}_{e_+}  & \text{ on }K_+\\
d_2\mathbb{S}_{e_2}  &\text{ on }K \\
 0&\text{ on }K_-\\
  \end{cases},\\
    \tau_3&=\varphi_{\bx_c}\begin{cases}0&\text{ on }K_+\\
 c_3\mathbb{S}_{e_1}  & \text{ on }K \\
  \mathbb{S}_{e_-,1}^\perp+d_3\mathbb{S}_{e_-}  &\text{ on }K_-
  \end{cases},\,
  \tau_4=\varphi_{\bx_c}\begin{cases}0&\text{ on }K_+\\
  c_4\mathbb{S}_{e_1}  & \text{ on }K \\
 \mathbb{S}_{e_-,2}^\perp+d_4\mathbb{S}_{e_-}  &\text{ on }K_-
  \end{cases},\\
  \tau_5&=\varphi_{\bx_c}\begin{cases}
  \mathbb{S}_{e_+}  & \text{ on }K_+\\
 c_5\mathbb{S}_{e_1,1}^\perp+d_5\mathbb{S}_{e_1,2}^\perp+g_5\mathbb{S}_{e_1}  &\text{ on }K  \\
h_5\mathbb{S}_{e_-}  & \text{ on }K_-\\
  \end{cases}.
  \end{split}
\end{align}
Using the normal continuity of $\tau_j$ across $e_1$ can compute $c_j,d_j$ for $j=1,2$. Using the normal continuity  of $\tau_j$  across $e_2$ can compute $c_j,d_j$ for $j=3,4$. Using the normal continuity across $e_1$ and $e_2$  of $\tau_5$  can compute $c_5,d_5,g_5,h_5$.

\end{remark}

\subsection{Comments for three dimensions}
This subsection extends the discrete stress space of  the mixed finte element  on tetrahedral grids \cite{Hu2015trianghigh,HuZhang2015tre} and only provides an outline. Apart from the $C^0$ continuity at vertices, the discrete stresses have some continuity across  edges.

If there are inconsistency boundary conditions   imposed on the intersection of three planes on $\Gamma_N$, it requires to deal with the degrees of freedom associated with vertices and  edges. Given  a tetrahedron $K:=\boldsymbol{x}_0\boldsymbol{x}_1\boldsymbol{x}_2\boldsymbol{x}_3$  in Figure~\ref{fig:tetrK},  suppose $\bm{x_0}$ denotes the corner vertex.   In order to relax some  continuity at vertices and edges, divide $K$ into four sub-tetrahedra with the barycentre $\bm{x}_0'$.  The goal is  to compute  the  new basis functions associated with vertex $\bm{x}_0$ and edges $\boldsymbol{x}_0\boldsymbol{x}_1$, $\boldsymbol{x}_0\boldsymbol{x}_2$, $\boldsymbol{x}_0\boldsymbol{x}_3$ as those in \eqref{eq:special}.
    Let $n_1$, $n_2$ and $n_3$ denote the outnormal of face $F_1:=\boldsymbol{x}_0\boldsymbol{x}_2\boldsymbol{x}_1$, face $F_2:=\boldsymbol{x}_0\boldsymbol{x}_1\boldsymbol{x}_3$ and face  $F_3:=\boldsymbol{x}_0\boldsymbol{x}_3\boldsymbol{x}_2$ respectively.

  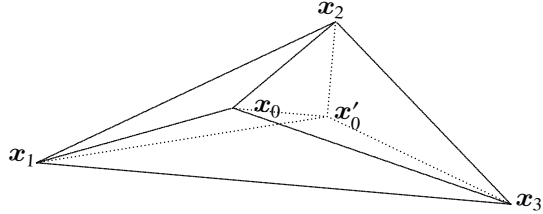
\begin{figure}[htb]
\setlength\unitlength{1.8pt}
\centering
\begin{picture}(0,50)
\put(-50,10){\begin{rotate}{-5}{ \begin{picture}(  100.,  40)(  0.,  0)
     \def\lb{\circle*{0.8}}\def\lc{\vrule width1.2pt height1.2pt}
     \def\la{\circle*{0.3}}
 \put( -6.,  0.){$\boldsymbol{x}_1$                                         }
 \put(  56.,  36.5999985){$\boldsymbol{x}_2$                                         }
 \put(  101.,  0.){$\boldsymbol{x}_3$                                         }
 \put(  43.0999985,  14.6999998){
 $\boldsymbol{x}_0$                                         }
  \put(  60,  14.6999998){
 $\boldsymbol{x}'_0$                                         }
 \multiput(   0.00,   0.00)(   0.250,   0.000){400}{\la}
 \multiput(   0.00,   0.00)(   0.234,   0.088){170}{\la}
 \multiput(   0.00,   0.00)(   0.216,   0.126){277}{\la}
 \multiput( 100.00,   0.00)(  -0.243,   0.061){247}{\la}
 \multiput(  40.00,  15.00)(   0.177,   0.177){113}{\la}
 \multiput( 100.00,   0.00)(  -0.188,   0.165){212}{\la}
 \multiput( 0.00,   0.00)(  0.80,   0.2){75}{\la}
\multiput( 100.00,   0.00)(  -0.8,   0.3){50}{\la}
 \multiput( 40.00,   15.00)( 0.8,   0){26}{\la}
 \multiput( 60.00,   35.00)( 0,   -0.8){25}{\la}
 \end{picture}}
	\end{rotate}}
\end{picture}\caption{Tetrahedron $K$}\label{fig:tetrK}
\end{figure}

  Given $1\leq m\leq 3$, note that there exist three independent symmetric matrices $\mathbb{S}_{j,m}$ in each sub-tetrahedron $K_m={\rm conv}(\bm{x}_0'\cup F_m)$ such that $\mathbb{S}_{j,m}n_m=0, 1\leq j\leq 3$ and three symmetric independent matrices $\mathbb{S}_{i,m}^\perp$ such that $\mathbb{S}_{i,m}^\perp:\mathbb{S}_{j,m}=0,1\leq i\leq 3$.
Assume the following expression
 \begin{equation*}
\begin{split}
  \tau_{i,1}&=\varphi_{\boldsymbol{x_0}}
  \begin{cases}
\mathbb{S}_{i,1}^\perp+\sum^{3}_{j=1}c_{j,1}\mathbb{S}_{j,1}& \text{ on }K_1 \\
 \sum^{3}_{j=1}d_{j,1}\mathbb{S}_{j,2} &\text{ on }K_2 \\
  \sum^{3}_{j=1}e_{j,1}\mathbb{S}_{j,3} &\text{ on }K_3\\
  \end{cases}
  \end{split}
\end{equation*}
with the   Lagrange basis function $\varphi_{\bm{x_0}}$ associated with $\bm{x_0}$ and nine constants $c_{j,1},d_{j,1},e_{j,1}$. It can be easily checked that $\tau_{i,1}n_m|_{F_m}=0$ for $m=2,3$. Using the normal continuity across the interior faces  leads to  the unique solutions $c_{j,1},d_{j,1},e_{j,1}$ as long as any two  of $F_m,1\leq m\leq 3$ do not lie in one plane. Hence $\tau_{i,1}, 1\leq i\leq 3$ are three  new basis function associated with $\bm{x_0}$. Similar arguments compute the other six functions associated with  the vertex $\bm{x_0}$.

As for any internal node $\boldsymbol{a}$ in the edge $\boldsymbol{x}_0\boldsymbol{x}_1$,  define
  \begin{equation*}
\begin{split}
  \xi_{i,1}&=\varphi_{\boldsymbol{a}}
  \begin{cases}
\mathbb{S}_{i,1}^\perp+\sum^{3}_{j=1}c_{j,1}\mathbb{S}_{j,1}& \text{ on }K_1\\
 \sum^{3}_{j=1}d_{j,1}\mathbb{S}_{j,2} &\text{ on }K_2
  \end{cases}
  \end{split}
\end{equation*}
with the  Lagrange basis function $\varphi_{\bm{a}}$ associated with $\bm{a}$ and six constants $c_{j,1},d_{j,1}$.
It should be mentioned that the normal continuity  across the interior face  $\boldsymbol{x}_1\boldsymbol{x}_0'\boldsymbol{x}_0$  only imposes three conditions. Therefore, there exist some $H({\rm div})$ bubble functions on $K_1\cup K_2$. Let $t$, $t_1$ and $t_2$ denote the tangential vector of edge $\boldsymbol{x}_0\boldsymbol{x}_1$, $\boldsymbol{x}_0\boldsymbol{x}_2$, $\boldsymbol{x}_0\boldsymbol{x}_3$, and let $n$ denote the normal vector of face $\boldsymbol{x}_1\boldsymbol{x}_0'\boldsymbol{x}_0$. The  three  $H({\rm div})$  bubble functions read
\begin{equation*}\begin{split}
b_1=\begin{cases}
 \varphi_{\boldsymbol{a}}tt^T& \text{ on }K_1\\
 0&\text{ on }K_2
  \end{cases},\
b_2=\begin{cases}
   0&\text{ on }K_1\\
 \varphi_{\boldsymbol{a}}tt^T& \text{ on }K_2
  \end{cases},\
  b_3=\begin{cases}
  \frac{\varphi_{\boldsymbol{a}}}{t_1^Tn}(t_1t^T+tt_1^T)&\text{ on }K_1\\
 \frac{\varphi_{\boldsymbol{a}}}{t_2^Tn}(t_2t^T+tt_2^T)& \text{ on }K_2
  \end{cases}.  \end{split}
\end{equation*}
Actually, $b_1 $ (resp. $b_2$)  is  already the original $H({\rm div})$ bubble function on $K_1$ (resp. $K_2$) from \cite{Hu2015trianghigh,HuZhang2015tre}. Unless $F_1$ and $F_2$ are parallel,  there always exist  the unique six constants $c_{j,1},d_{j,1}$ with the constraint of the bubble functions. Hence $\xi_{i,1}$ for $ 1\leq i\leq 3$ are three  new basis function associated with edge $\boldsymbol{x}_0\boldsymbol{x}_1$. 
\section{Numerics}\label{Numerics}
This section provides five examples to  compare the  conforming mixed element of $k=3$  in \cite{HuZhang2014a}  introduced in Subsect.~\ref{sec:orignalMFEM}  before and after relaxing the $C^0$ vertex continuity of stress spaces. Example~5.2-5.4  show the results by relaxing the $C^0$ continuity at corner vertices in Subsect.~\ref{sec:corner} on uniform and adaptive meshes. The last example demonstrates convergence rates of  the adaptive  mixed finite element with nested stress spaces  in Subsect.~\ref{sec:extend}.
\subsection{Interface problem}\label{sec:Dislinecontinu}
Let $\boldsymbol{x}=(x_1,x_2)^T$. Consider a piecewise constant stress
 $\sigma=\begin{pmatrix}
                                                                             \sigma_{11} & 0 \\
                                                                        0 &  0 \\
                                                                           \end{pmatrix}
$ with a discontinuous  pure tangential component $\sigma_{11}$ across $x_2=0.5$ depicted in Figure~ \ref{fig:interface}\subref{fig:sub1a}.  Suppose that the intersection of any edge  of  $\hat{\T}$ with $x_2=0.5$ is an empty set, a vertex or  the edge itself.  Recall the stress space  $\Sigma(\hat{\T})$ in \eqref{Def:Odisstress}.
Since any matrix-valued function in  $\Sigma(\hat{\T})$ is $C^0$ continuous at each vertex of $\hat{\T}$. The mixed FEM \eqref{disMixElas} cannot achieve  the exact stress  due to  $\sigma\not\in\Sigma(\hat{\T})$.

For   vertex $\bm{a}$  on $x_2=0.5$ shown in Figure~\ref{fig:interface}, employ  the treatment in Subsect.~\ref{sec:extend}  to split the degree of freedom with respect to the pure tangential component along $x_2=0.5$  into two degrees of freedom shared by the upper plane and lower plane, respectively.  The two solid points in Figure~\ref{fig:interface} represent the two new separate degrees of freedom; the two circles  represent  the degrees of freedom with respect to  normal components. Deal with all vertices on $x_2=0.5$ analogously. An extended stress space $\widetilde{\Sigma(\hat{\T})}$ is then constructed by enriching  $\Sigma(\hat{\T})$ with all the new  functions whose pure tangential component  are discontinuous  across $x_2=0.5$.
\begin{figure}[!ht]\centering                                                  
\subfigure[Before remedy]{                    
\begin{minipage}{6cm}\centering                                                          
\setlength{\unitlength}{1.8cm}
\begin{picture}(2,2)
\put(0,0){\line(1,0){2}}
\put(0,0){\line(0,1){2}}
\put(2,0){\line(0,1){2}}
\put(0,2){\line(1,0){2}}
\put(0,1){\line(1,0){2}}
\put(1,1){\line(1,1){1}}
\put(1,1){\line(-1,1){1}}
\put(1,1){\line(-1,-1){1}}
\put(1,1){\line(1,-1){1}}
 \put(-0.8,1){$ x_2=0.5$}
  \put(0.9,1.1){ $\boldsymbol{a}$}
   \put(0.6,0.5){ $\sigma_{11}=1$}
    \put(0.5,1.5){ $\sigma_{11}=10$}
\put(1.,1.04){\circle{0.09}}
\put(0.95,0.96){\circle{0.08}}
\put(1.05,0.96){\circle{0.08}}
\end{picture}    \label{fig:sub1a}          
\end{minipage}}\subfigure[After remedy]{                    
\begin{minipage}{7cm}\centering                                                          
\setlength{\unitlength}{1.8cm}
\begin{picture}(2,2)
\put(0,0.0){\line(1,0){2}}
\put(0,0.0){\line(0,1){1}}
\put(0,1.1){\line(0,1){1}}
\put(2,-0){\line(0,1){1}}
\put(2,1.1){\line(0,1){1}}
\put(0,2.1){\line(1,0){2}}
\put(0,1.1){\line(1,0){2}}
\put(0,1){\line(1,0){2}}
\put(1,1.1){\line(1,1){1}}
\put(1,1.1){\line(-1,1){1}}
\put(1,1){\line(-1,-1){1}}
\put(1,1){\line(1,-1){1}}
 \put(-0.80,1){$ x_2=0.5$}
  \put(0.9,1.2){ $\boldsymbol{a}$}
   \put(0.6,0.5){ $\sigma_{11}=1$}
    \put(0.5,1.6){ $\sigma_{11}=10$}
\put(1,0.97){\circle*{0.09}}
\put(1,1.13){\circle*{0.09}}
\put(0.95,1.05){\circle{0.08}}
\put(1.05,1.05){\circle{0.08}}
\end{picture}       \label{fig:sub1b}      
\end{minipage}   }\caption{Degrees  of freedom at vertex $a$} \label{fig:interface}
\end{figure}
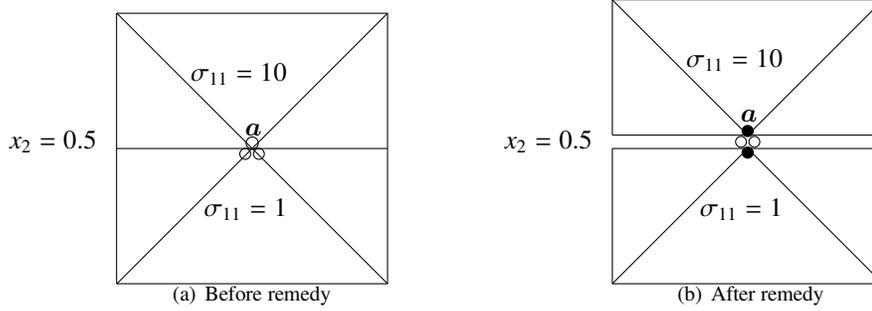

Consider the interpolation of $\sigma$ in $\widetilde{\Sigma(\hat{\T})}$. Let the interpolation $I_{\hat{\T}}\sigma_{11}$ equal to $1$ at vertex ${\boldsymbol{a}}$ in each element below $x_2=0.5$, and  $10$  in each element above. Thus $I_{\hat{\T}}\sigma_{11}=\sigma_{11}$. Since the computing error is less than the interpolation error, the mixed FEM with  the extended stress space $\widetilde{\Sigma(\hat{\T})}$ computes the exact stress.
\subsection{A benchmark problem over an L-shaped domain  with treatment of corners on uniform  meshes}Consider  the model problem on the rotated L-shaped domain $\Omega\subset\mathbb{R}^2$ as depicted in Figure \ref{fig:Lshapemesh}.  The exact solution reads in polar coordinates
\begin{equation*}
\begin{split}
  &u_r(r,\phi)=\frac{r^\alpha}{2\mu}(-(\alpha+1)\cos((\alpha+1)\phi)+(C_2-\alpha-1)C_1\cos((\alpha-1)\phi)),\\
  & u_\phi(r,\phi)=\frac{r^\alpha}{2\mu}((\alpha+1)\sin((\alpha+1)\phi)+(C_2+\alpha-1)C_1\sin((\alpha-1)\phi)).
  \end{split}
\end{equation*}
The constants are $C_1:=-\cos((\alpha+1)\omega)/\cos((\alpha-1)\omega)$ and $C_2:=2(\lambda+2\mu)/(\lambda+\mu)$, where $\alpha=0.544483736782$ is the positive solution of $\alpha\sin(2\omega)+\sin(2\omega\alpha)=0$ for $\omega=3\pi/4$ and with Lam\'{e} parameter $\lambda$ and $\mu$ according to the elasticity modulus is $E=10^5$ and the Poisson's ratio $\nu=0.499$. The volume force and the traction boundary data vanish, and the Dirichlet boundary conditions are taken from the exact solution. The exact solution exhibits a strong singularity at the origin $\bx_c$.

The sequence of meshes is generated uniformly by the initial mesh from Figure~\ref{fig:Lshapemesh}.  It  is unnecessary to deal with the degrees of freedom at corner vertices due to the zero traction boundary data. However, considering the singularity  of the exact solution at $\bx_c $, we would like to investigate the performance of the mixed FEM in \cite{HuZhang2014a} after  relaxing the $C^0$ continuity at $\bx_c$ as in  Subsect.~\ref{sec:corner}.  Table \ref{Tab:lshape} demonstrates  that after the remedy,  the errors of $\|\sigma-\sigma(\hat{\T})\|_A$ and $\|u-u(\hat{\T})\|_0$ are largely reduced. This is because the relaxation of the continuity at the origin $\bx_c$ adds several more degrees of freedom  at  $\bx_c$.
\begin{figure}[h!]\label{fig:Lshapemesh}
\includegraphics[width=5in]{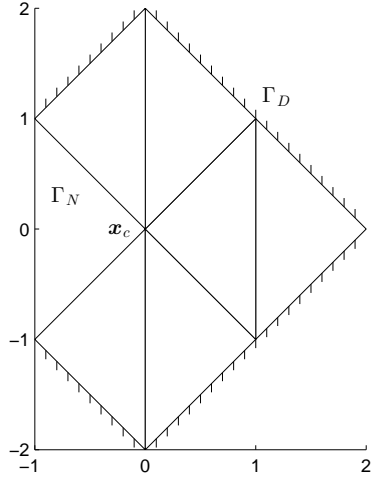}
\caption{Initial mesh for the L-shaped domain}
\end{figure}
\begin{table}[h!]\caption{ A benchmark problem on L-shaped domain on uniform meshes}
\begin{tabular}{|c|c|c|c|}
\hline
\multicolumn{4}{|c|}{$\|\sigma-\sigma(\hat{\T})\|_A$}\\
\hline
Before remedy& Order &After remedy&Order\\
\hline
7.5143E-03	&		&	3.1046E-03	&				\\
5.5899E-03	&	0.4268	&	2.1945E-03	&	0.5005\\
3.9400E-03	&	0.5046 	&	1.5147E-03	&	0.5348 	\\
2.7379E-03	&	0.5251 	&	1.0416E-03	&	0.5402	\\
1.8894E-03	&	0.5351 	&	7.1517E-04	&	0.5424\\

\hline
\multicolumn{4}{|c|}{$\|u-u(\hat{\T})\|_0$}\\
\hline
7.2407E-06	&	 	&	2.1190E-06	&			\\
3.8230E-06	&	 0.9214	&	8.9234E-07	&1.2477 	  	\\
1.8422E-06	&	 1.0533&	3.6866E-07	&	 1.2753\\
8.8088E-07	&	 1.0644&	1.5774E-07	&	 1.2247 \\
4.1866E-07	&	1.0732&	6.9708E-08	&	1.1782	\\
  \hline
\end{tabular}\label{Tab:lshape}
\end{table}
\subsection{Cook's membrane problem  with treatment of corners on uniform  meshes}\label{sec:cook}The underlying domain is a quadrilateral with vertices at $(0,0), (48,44),(48,60)$ and $(0,44)$. It is fixed $(u=0)$ at the left edge of the boundary $(x_1=0)$ while a uniform traction force
pointing upwards $(\sigma n=(0,1)^T)$ is applied at the right edge $(x_1=48)$. At the remaining part of the boundary it is kept in equilibrium $(\sigma n=0)$, see Figure \ref{cookmesh}. The elasticity modulus is $E=10^5$ and the Poisson's ratio $\nu=0.499$. At the corner nodes $\bx_{c,1}$ and $\bx_{c,2}$, there exist inconsistent traction  boundary conditions.

This example relaxes the $C^0$ continuity at $\bx_{c,1}$ and $\bx_{c,2}$. Since the exact solution is unknown, the fine grid approximation is computed by the standard continuous $P_5$-FEM on twice  uniform refinements of the grid for the last level.  Since the inconsistency of the boundary conditions at $\bx_{c,1}$ and $\bx_{c,2}$,  the least squares method is employed before remedy to decide the values of stress at the two right corners.  Table \ref{Tab:cook} shows that after the remedy, the errors are reduced, especially on  coarse meshes. This happens because  the error of   inexact traction boundary conditions dominates on  coarse meshes.
\begin{figure}[h!]\label{cookmesh}
\includegraphics[width=4in]{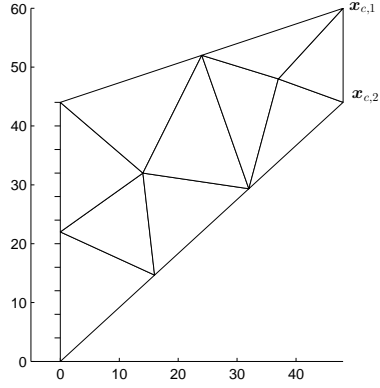}
\caption{Initial mesh for Cook's membrane problem}
\end{figure}
\begin{table}[h!]\caption{Cook's membrane problem on uniform meshes}
\begin{tabular}{|c|c|c|c|}
  \hline
\multicolumn{4}{|c|}{$\|\sigma-\sigma(\hat{\T})\|_A$}\\
\hline
Before remedy& Order&After remedy& Order\\
\hline							
0.0488	&		&	0.0341 	&			\\
0.0299	&	0.7048 	&	0.0241 	&	0.5001 		\\
0.0190	&	0.6556 	&	0.0167	&	0.5286	\\
0.0121 	&	0.6470 	&	0.0113 	&	0.5685  	\\
\hline
\multicolumn{4}{|c|}{$\|u-u(\hat{\T})\|_0$}\\
\hline
6.2925E-03	&		&	2.1216-03	&		\\
3.0423E-03	&	1.0485 	&	1.0712E-03	&	0.9859 		\\
1.4886E-03	&	1.0311 	&	5.3616E-04	&	0.9985  	\\
7.2422E-04	&	1.0395 	&	2.7651E-04	&	0.9553 	 	\\
  \hline
\end{tabular}\label{Tab:cook}
\end{table}
\subsection{Cook's membrane problem  with treatment of corners on adaptive meshes}
In the case of $|\Gamma_N|\neq 0$ and $u_D\not\equiv0$ on $\Gamma_D$,  the error estimator  $\eta^2(\hat{\T})$ modifies $ \mathcal{J}_{e,1}$ and $ \mathcal{J}_{e,2}$ in \eqref{eq:estimator} from \cite[Sec.~4]{ChenHuHuangMan2017} as follows
$$
\begin{array}{ll}
 \disp \mathcal{J}_{e,1}:=&\left\{\begin{array}{ll}
 \disp\Big[(A\sigma(\hat{\T}))\tangevec_e\cdot\tangevec_e\Big]_e&{\rm if~} e\in\hat{\mathcal{E}}(\Omega),\\
 \disp\Big((A\sigma(\hat{\T}))\tangevec_e\cdot\tangevec_e-\partial_{\tangevec_e}(u_D\cdot\tangevec_e) \Big)\big|_e\quad\quad\quad\quad\quad\ & {\rm if~} e\in\hat{\mathcal{E}}(\Gamma_D),
 \end{array}\right. \\
 \\
 \disp \mathcal{J}_{e,2}:=&\left\{\begin{array}{lr}
\disp \Big[{\rm curl}(A\sigma(\hat{\T}))\cdot \tangevec_e\Big]_e& {\rm if~} e\in\hat{\mathcal{E}}(\Omega),\hspace{2mm}\\
\disp \Big({\rm curl}(A\sigma(\hat{\T}))\cdot \tangevec_e +\partial _{\tangevec_e\tangevec_e}(u_D\cdot\normalvec_e)-\partial _{\tangevec_e}\big((A\sigma(\hat{\T}))\tangevec_e\cdot \normalvec_e\big)\Big)\big|_e&\quad {\rm if~} e\in\hat{\mathcal{E}}(\Gamma_D).\hspace{0.5mm}
 \end{array}\right.
 \end{array}
$$
The reliability holds \begin{equation*}
  \|\sigma-\sigma(\hat{\T})\|^2_A\leq \widetilde{C_{Rel}} \big(\eta^2( \hat{\T})+{\rm osc}^2(f,\hat{\T})+{\rm osc}^2(g,\hat{\cE}(\Gamma_N))\big)
\end{equation*}
with the data oscillation   ${\rm osc}^2(g, \hat{\cE} (\Gamma_N)):=\sum_{e\in\hat{\cE}(\Gamma_N)}h_e\|g-g(\hat{\T})\|^2_{0,e}$.

Define the total estimator $\eta:=\big(\eta^2( \hat{\T})+{\rm osc}^2(f,\hat{\T})+{\rm osc}^2(g,\hat{\cE}(\Gamma_N))\big)^{1/2}$. The discrete stress and  the total estimator  by the mixed FEM \eqref{disMixElas} before treatment  is denoted by $\sigma(\hat{\T})^O$ and $\eta^O$. The corresponding results after the relaxation of the $C^0$ continuity at $\bx_{c,1}$ and $\bx_{c,2}$ are  denoted by $\sigma(\hat{\T})^M$ and $\eta^M$.  The approximations by  the standard continuous $P_5$ element on the very fine mesh are computed as the reference solutions. The results are shown in Figure~\ref{cook-compare}. Since   the error of   inexact boundary conditions  dominates
at the initial steps, the result without treatment is worse than that with treatment. After several refinements, the results are almost the same since  the error of   inexact boundary conditions  has deduced greatly.
\begin{figure}[h!]\label{cook-compare}
\includegraphics[width=8cm]{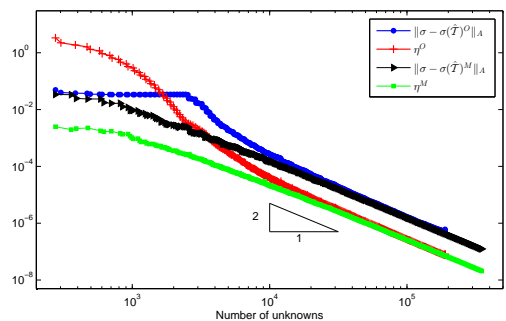}
\caption{Errors $\|\sigma-\sigma(\hat{\T})\|_A$ and $\eta$ vs $\#$dofs for the Cook's membrane problem}
\end{figure}
\subsection{Comparison of adaptivity algorithms for two mixed elements}This example   compares the mixed element of $k=3 $ in  \cite{HuZhang2014a}  and  the extended mixed element in Subsect.~\ref{sec:extend} for the benchmark problem over an L-shaped domain. The results are shown in Figure~\ref{LshapeAdap}. Let $\sigma(\hat{\T})^O$ and $\eta^O$ denote the discrete stress and the total estimator  of the former element , and the corresponding results of the extended element by $\sigma(\hat{\T})^E$ and $\eta^E$.  Figure~\ref{LshapeAdap} presents the convergence history plot and  illustrates that  there is not much difference between  these two elements.
\begin{figure}[h!]\label{LshapeAdap}
\includegraphics[width=8cm]{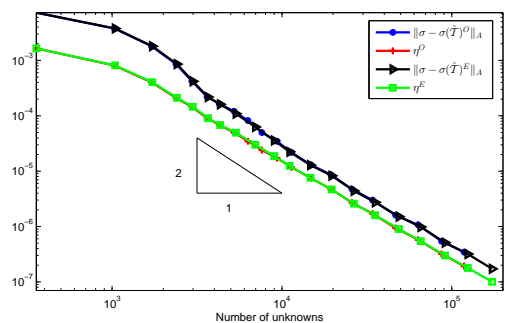}
\caption{Errors $\|\sigma-\sigma(\hat{\T})\|_A$ and $\eta$ vs $\#$dofs for the problem on an L-shaped domain}
\end{figure}
\bibliographystyle{siamplain}
\bibliography{lit_mit_doi}
\end{document}